\documentclass[a4paper,12pt]{article}
\usepackage{amscd}
\usepackage{amssymb}
\usepackage{amsfonts}
\usepackage{stmaryrd}
\usepackage{amsmath}
\allowdisplaybreaks
\usepackage{amsthm}
\usepackage{bbm}
\usepackage{CJK}
\usepackage{fancyhdr}
\usepackage{graphicx}

\usepackage{geometry}
\usepackage{ifpdf}
\ifpdf
  \usepackage[colorlinks=true,linkcolor=blue,citecolor=red, final,backref=page,hyperindex]{hyperref}
\else
  \usepackage[colorlinks,final,backref=page,hyperindex,hypertex]{hyperref}
\fi
\usepackage{tikz}
\usepackage[active]{srcltx}

\usepackage{pxfonts}
\usepackage{indentfirst}
\usepackage{latexsym}
\usepackage{mathrsfs}
\usepackage{xypic}
\usepackage{a4wide}
\usepackage{microtype}

\usepackage[sf,tiny,bf]{titlesec}
\titlelabel{\thetitle.~}


\hoffset=0cm
\oddsidemargin=0pt
\marginparsep=0cm
\marginparwidth=0cm
\voffset=-1.5cm

\textheight=23cm
\textwidth=16cm


  \theoremstyle{plain}
    \newtheorem{thm}{Theorem}[section]
    \newtheorem{prop}[thm]{Proposition}

   \newtheorem{lemma}[thm]{Lemma}
    \newtheorem{corollary}[thm]{Corollary}

\theoremstyle{definition}
    \newtheorem{defn}[thm]{Definition}

    \newtheorem{exam}[thm]{Example}

\theoremstyle{remark}

\newcommand{\bmx}{\begin{pmatrix}}
\newcommand{\emx}{\end{pmatrix}}


\newcommand{\A}{\mathcal{A}}
\newcommand{\ot}{\otimes}
\renewcommand{\b}{\beta}

\def\a{\alpha}
\def\O{\mathcal{O}}

\newcommand{\what}{\widehat}

\date{}
\usepackage{amssymb}

\begin{document}
\title{ \sf Cohomology  and deformations of $\mathcal{O}$-operators on Hom-associative algebras}
\author{\bf T. Chtioui, S. Mabrouk, A. Makhlouf}
\author{{ Taoufik Chtioui$^{1}$
 \footnote {  E-mail: chtioui.taoufik@yahoo.fr}
, Sami Mabrouk$^{2}$
 \footnote { E-mail: mabrouksami00@yahoo.fr}
, Abdenacer Makhlouf$^{3}$
 \footnote {  E-mail: Abdenacer.Makhlouf@uha.fr. (Corresponding author)}
}\\
{\small 1.  University of Sfax, Faculty of Sciences Sfax,  BP
1171, 3038 Sfax, Tunisia} \\
{\small 2.  University of Gafsa, Faculty of Sciences Gafsa, 2112 Gafsa, Tunisia}\\
{\small 3.~ IRIMAS - D\'epartement de Math\'ematiques, 6, rue des fr\`eres Lumi\`ere,
F-68093 Mulhouse, France}}
 \maketitle

\begin{abstract}
In this paper, we introduce the cohomology theory of $\mathcal{O}$-operators on Hom-associative algebras. This cohomology can also be viewed  as the Hochschild cohomology of a certain Hom-associative algebra with coefficients in a suitable bimodule.
Next, we study infinitesimal and formal deformations of an $\O$-operator and show that they
are governed by the above-defined cohomology. Furthermore,   the notion of  Nijenhuis elements associated with an $\O$-operator  is introduced  to characterize trivial infinitesimal deformations.

\end{abstract}

 {\bf MSC(2010)}: 16E40, 16S80, 16W99.

{\bf keywords}: $\mathcal{O}$-operator, Maurer-Cartan equation, graded-Lie algebra, Hochschild cohomology, Formal deformation, Hom-associative {\bf r}-matrix.

\tableofcontents
\numberwithin{equation}{section}
\vspace{0.2cm}

\section{Introduction}
The concept of Rota-Baxter operators on associative algebras was introduced in the 1960th by G. Baxter  \cite{baxter} in the study of fluctuation theory in Probability and then the theory was established by G.-C. Rota \cite{rota} mainly  in connection with  Combinatorics.
Motivated by the study of associative analogues of Poisson structures, Uchino introduced the notion of  generalized Rota-Baxter operator (called also $\O$-operator) on a bimodule over an associative algebra in \cite{uchino1}.
In \cite{uchino2}, Uchino also introduced the notion of a strong Maurer-Cartan equation on a bimodule $(V;\ell,r)$. A solution of the strong Maurer-Cartan equation can be seen as the associative analogue of a closed $2$-form complementary to a Poisson structure.
Rota-Baxter operators and their generalization $\O$-operators are becoming rather popular due to their interest in various domain like nonassociative algebra and applied algebra. Their deformations and their controlling cohomologies have been studied for Lie algebras in \cite{tang} and  for associative algebras in \cite{def O-op on ass al}, see also \cite{CaiSheng} \cite{LazarevSheng}. 

The first instances of  Hom-type algebras appeared in $q$-deformations of algebras of vector fields in physics contexts. It turns out that the Jacobi condition is no longer satisfied in the $q$-deformations of Witt algebra and Virasoro algebra using Jackson derivative. Hartwig, Larsson and Silvestrov \cite{hls} introduced a so called Hom-Lie algebras, generalizing Lie algebras and  where the usual Jacobi identity is twisted by a homomorphism. The pending Hom-associative algebras were  introduced in \cite{ms}. Various classical structures were generalized within this framework. A huge research activity was dedicated to Hom-type algebras, due in part to the prospect of having a general framework in which one can produce many types of natural deformations  which are of interest to both mathematicians and physicists. Cohomology and deformations of Hom-Lie and Hom-associative algebras were studied in \cite{amm-ej-makh,hurleMakhlouf,sheng}. Notice  that  Cohomology  and deformations of $\mathcal{O}$-operators on Hom-Lie algebras were discussed in 
\cite{O-op on hom-lie}.

The purpose of this paper is to introduce and study a cohomology controlling deformations of $\mathcal{O}$-operators on Hom-associative algebras. Therefore, we consider one parameter formal deformations and provide relevant results about infinitesimal deformations and general $n$-order deformations. In addition, infinitesimal deformations are characterized using  Nijenhuis elements. In Section 2, relevant definitions and preliminary results are presented. They deal with Hom-associative algebras  and Hom-dendriform algebra, bimodule structures, Rota-Baxter structures, $\O$-operators, Nijenhuis operators, cohomology and graded Lie algebras.  Main results of this paper are given in Section 3 in which we provide  Cohomologies of $\O$-operators on Hom-associative algebras and  their deformations are considered in Section 4.

\section{Preliminaries}
In this section, we  recall the relevant definitions and the  notion of $\mathcal{O}$-operator on Hom-associative algebra.  We also recall Hom-dendriform structures induced from $\mathcal{O}$-operators. Finally, we construct a graded Lie algebra with explicit graded Lie bracket whose Maurer-Cartan elements are given by $\mathcal{O}$-operators.

\subsection{Hom-associative and Hom-dendriform algebras}
\begin{defn}
A Hom-associative algebra is a Hom-module  $(A, \alpha)$, consisting of a $\mathbb{K}$-vector space $A$ and a linear map $\alpha$,  together with a bilinear map $\mu : A \times A \rightarrow A,~ (a, b) \mapsto a\cdot  b$, that satisfies
\begin{align*}
\alpha (a) \cdot (b \cdot c) = (a\cdot  b) \cdot \alpha (c), ~~~ \text{ for all } ~ a, b, c \in A.
\end{align*}
\end{defn}

A Hom-associative algebra is called multiplicative if $\alpha (a \cdot b) = \alpha (a) \cdot \alpha (b)$. By a Hom-associative algebra, we shall always mean a multiplicative one, unless otherwise specified.
\begin{defn}
Let $(A,\mu,\a)$ be a Hom-associative algebra and $M$ a vector space. Let $\phi \in gl(M)$ and $l: A\otimes M \to M,\ r: M \otimes A \to M$  be two linear maps. We say that the tuple $(M,l,r,\phi)$ is a bimodule of $A$ if  for any $x,y  \in A $ and $v \in M$
\begin{align*}
 \phi l(x,v)=l(\a(x),\phi(v)),&\  \phi r(v,x)=r(\phi(v),\a(x)),\\
l(x y,\phi(v)) =&l(\a(x),l(y,v)), \\
 r(\phi(v),x y)=&r(r(v,x),\a(y)),\\
  l(\a(x),r(v,y))=&r(l(x,v),\a(y)).
\end{align*}
\end{defn}

Unless otherwise stated, $l(x,v)$ (resp. $r(x,v)$) may be denoted, depending on the situation, by $xv$ or $l(x)v$ (resp. $vx$ or $r(x)v$).

Recall that if we consider a Hom-associative algebra $(A, \mu,\a)$. Then  $(M,l,r,\phi)$ is a bimodule of $A$ if and only if  the vector space $A \oplus M$ (the split extension of $A$ by $M$) carries a Hom-associative structure with product given by
\begin{align*}
(x, u) \cdot_{l,r} (y, v ) =& ( x \cdot  y , l(x,u) + r(v,y) ),\ \forall x,y \in A, u,v \in M, \\
(\a,\phi)(x,u)=&(\a(x),\phi(u)).
\end{align*}
This is called the semi-direct product of $A$ with $M$.

In \cite{amm-ej-makh, makh-sil} the authors define a Hochschild-type cohomology of a Hom-associative algebra and study the one parameter formal deformation theory for these type of algebras. Let $(A, \alpha, \mu)$ be a Hom-associative algebra and $(M,l,r,\phi)$ be a bimodule. For each $n \geq 1$,  the group of $n$-cochains is defined as
\begin{align*}
C^n_{\alpha,\phi}(A, M):= \{ f : A^{\otimes n} \rightarrow M |~ \phi \circ f (a_1, \ldots, a_n ) = f (\alpha (a_1), \ldots, \alpha (a_n)) \}
\end{align*}
and the differential $\delta_{\alpha,\phi} : C^n_{\alpha, \phi} (A, M) \rightarrow C^{n+1}_{\alpha, \phi} (A, M)$ is given by
\begin{align}\label{hom-ass-diff}
(\delta_{\a,\phi} f)(a_1, \ldots, a_{n+1}) =~& \alpha^{n-1} (a_1)  f (a_2, \ldots, a_{n+1}) \\
~&+ \sum_{i=1}^n (-1)^{i} f (   \alpha (a_1), \ldots, \alpha (a_{i-1}), \mu(a_i, a_{i+1}), \alpha (a_{i+2}), \ldots, \alpha (a_{n+1}) ) \nonumber \\
~&+ (-1)^{n+1} f (a_1, \ldots, a_n)  \alpha^{n-1} (a_{n+1}). \nonumber
\end{align}
for $a_1, \ldots, a_{n+1} \in A$.
The cohomology of this complex is called the Hochschild cohomology of $A$ with coefficients in the bimodule $(M,l,r,\phi)$ and the cohomology groups are denoted by
$H^\bullet_{\a,\phi}(A,M)$.

In \cite{uchino1}, Uchino gave a characterization of an $\mathcal{O}$-operator on an associative algebra. This result is still true for Hom-associative algebras. Recall first the notion of an $\O$-operator on a Hom-associative algebra.

\begin{defn}
Let $(A,\mu,\a)$ be a Hom-associative algebra and $(M,l,r,\phi)$ be a bimodule. A linear map $T: M \to A$ is called an $\O$-operator with respect to $M$ if it satisfies
\begin{align*}
T\phi=\a T,\ \quad T(u)  T(v) = T ( l(T(u))v+r(T(v))u ), ~~~ \text{ for all } u, v \in M.
\end{align*}
\end{defn}

\begin{exam}
A Rota-Baxter operator $\mathcal{R}: A \to A$ of weight $0$ is an $\O$-operator on $A$ with respect to the adjoint representation $(A,L,R,\a)$.
\end{exam}

\begin{exam}\label{example1ass}
Let $\{x_1,x_2,x_3\}$  be a basis of a $3$-dimensional vector space
$A$ over $\mathbb{K}$. Consider the following multiplication $\mu$ and linear map
$\alpha$ on $A$ that define Hom-associative algebras over $\mathbb{K}^3${\rm :}
$$
\begin{array}{ll}
\begin{array}{lll}
 \mu( x_1,x_1)&=& a\ x_1, \ \\
 \mu(x_1,  x_2)&=&\mu( x_2 ,  x_1)=a\ x_2,\\
\mu(x_1 ,  x_3) &=& \mu(x_3,  x_1)=b\ x_3,\\
 \end{array}
 & \quad
 \begin{array}{lll}
\mu(x_2,  x_2) &=& a\ x_2, \ \\
 \mu(x_2,   x_3)&=& b\ x_3, \ \\
 \mu(x_3,  x_2)&=&  \mu(x_3, x_3)=0,
  \end{array}
\end{array}
$$

$$  \alpha (x_1)= a\ x_1, \quad
 \alpha (x_2) =a\ x_2 , \quad
   \alpha (x_3)=b\ x_3,
$$
where $a,b$ are parameters in $\mathbb{K}$. The algebras are not associative
when $a\neq b$ and $b\neq 0$, since
$ \mu(\mu(x_1,  x_1), x_3)-  \mu(x_1,
\mu(x_1, x_3))=(a-b)b x_3.$

Let $\mathcal{R}$  be the  operator defined with respect to the basis $\{x_1,x_2,x_3\}$ by
$$
\mathcal{ R}(x_1)=  \rho_1 x_3,\
\mathcal{R}( x_2 )=\rho_2 x_3, \
\mathcal{R}(x_3 )  =0,
$$
where $\rho_1,\rho_2$ are  parameters in $\mathbb{K}$.  Then $\mathcal{R}$ is a Rota-Baxter operator  of weight $0$ on the Hom- associative algebra $(A,\mu ,\a)$.

\end{exam}

\begin{exam}
Let $(A,\mu,\a)$ be a Hom-associative algebra and $(M,l,r,\phi)$ be an $A$-bimodule.  We can easily, verify that $A\oplus M$ is  an $A$-bimodule under the following actions:
\begin{align*}
    a\bullet (b,m)=(\mu(a,b), a \cdot m), \quad (b,m) \bullet a=(0, m \cdot a), \forall a,b \in A,\ m \in M.
\end{align*}
Define the linear map $T: A \oplus M \to A,\ (a,m)\mapsto a$. Then $T$ is an $\O$-operator on $A$ with respect to the bimodule $A\oplus M$.

\end{exam}

\begin{prop}
A linear map $T : M \rightarrow A$ is an $\mathcal{O}$-operator on a Hom-associative algebra $A$ with respect to the $A$-bimodule $M$ if and only if the graph of $T$,
\begin{align*}
\mathrm{Gr}(T) = \{ (T(u), u ) |~ u \in M \}
\end{align*}
is a subalgebra of the semi-direct product algebra $A \oplus M$.
\end{prop}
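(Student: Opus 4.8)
The plan is to use that, since $T$ is linear, $\mathrm{Gr}(T)=\{(T(u),u)\mid u\in M\}$ is automatically a linear subspace of $A\oplus M$; hence being a subalgebra of the Hom-associative algebra $A\oplus M$ reduces to two closure conditions—stability under the structure map $(\alpha,\phi)$ and stability under the product $\cdot_{l,r}$—which I will match one at a time to the two defining conditions of an $\mathcal{O}$-operator.

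First I would treat the structure map. For $u\in M$ one has $(\alpha,\phi)(T(u),u)=(\alpha T(u),\phi(u))$, and this element lies in $\mathrm{Gr}(T)$ precisely when its first component is $T$ of its second, i.e. $\alpha T(u)=T(\phi(u))$. As $u$ is arbitrary, stability of $\mathrm{Gr}(T)$ under $(\alpha,\phi)$ is equivalent to $\alpha T=T\phi$, the first half of the $\mathcal{O}$-operator axioms.

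Next I would treat the product. Multiplying two generic elements $(T(u),u),(T(v),v)\in\mathrm{Gr}(T)$ in the semi-direct product yields an element whose $A$-component is $T(u)\cdot T(v)$ and whose $M$-component is the combination of bimodule actions prescribed by $\cdot_{l,r}$, namely $l(T(u))v+r(T(v))u$. This product again belongs to $\mathrm{Gr}(T)$ if and only if its $A$-component equals $T$ of its $M$-component, that is $T(u)\cdot T(v)=T\big(l(T(u))v+r(T(v))u\big)$. Letting $u,v$ range over $M$, stability of $\mathrm{Gr}(T)$ under $\cdot_{l,r}$ is therefore equivalent to the multiplicativity axiom of an $\mathcal{O}$-operator.

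Finally I would assemble the two equivalences: $\mathrm{Gr}(T)$ is a subalgebra of $A\oplus M$ exactly when both closure conditions hold, which by the preceding two steps is exactly when $T$ satisfies $\alpha T=T\phi$ together with the multiplicativity identity—i.e. exactly when $T$ is an $\mathcal{O}$-operator. Because every step is a genuine equivalence, both implications come out at once and no separate converse argument is needed. I do not anticipate a real obstacle: the content is a bookkeeping verification, and the only point demanding attention is that in the Hom-setting a subalgebra must be preserved by the structure map $(\alpha,\phi)$ as well as by the product, so the condition $\alpha T=T\phi$ must not be dropped.
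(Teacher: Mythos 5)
Your proposal is correct and is exactly the routine verification the paper has in mind (the paper states this proposition without proof): the graph is automatically a linear subspace, closure under $(\alpha,\phi)$ gives $\alpha T=T\phi$, and closure under $\cdot_{l,r}$ gives $T(u)\cdot T(v)=T(l(T(u))v+r(T(v))u)$, with every step an equivalence. One minor remark: your computation of the $M$-component as $l(T(u))v+r(T(v))u$ uses the intended semi-direct product $(x,u)\cdot_{l,r}(y,v)=(x\cdot y,\ l(x,v)+r(u,y))$, which is what the $\mathcal{O}$-operator axiom requires, whereas the paper's displayed formula reads $l(x,u)+r(v,y)$ — an apparent typo that you have correctly silently fixed.
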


In the following proposition, we show that an $\O$-operator
can be lifted up the  Rota-Baxter operator.

\begin{prop}
Let $(A,\mu,\a)$ be a Hom-associative algebra, $(M,l,r,\phi)$ be an $A$-bimodule and $T:M \to A$ be a linear map. Define $\widehat{T} \in End(A\oplus M)$  by $\widehat{T}(a,m)=(T(m),0)$.  Then $T$ is an $\O$-operator if and only $\widehat{T}$  is a Rota-Baxter operator on $A\oplus M$.
\end{prop}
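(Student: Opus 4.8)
The plan is to unwind each direction of the claimed equivalence into explicit identities in the semi-direct product algebra $A \oplus M$ and then match them term by term. Recall that for $\widehat{T}$ to be a Rota-Baxter operator of weight $0$ on the Hom-associative algebra $(A \oplus M, \cdot_{l,r}, (\a,\phi))$ amounts to two separate requirements: first, compatibility with the twist, $\widehat{T} \circ (\a,\phi) = (\a,\phi) \circ \widehat{T}$; and second, the Rota-Baxter identity $\widehat{T}(X) \cdot_{l,r} \widehat{T}(Y) = \widehat{T}\big(\widehat{T}(X) \cdot_{l,r} Y + X \cdot_{l,r} \widehat{T}(Y)\big)$ for all $X, Y \in A \oplus M$. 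I would treat these two requirements one at a time and show that each is equivalent to the corresponding half of the definition of an $\O$-operator, so that the conjunction gives the desired statement.

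For the twist-compatibility, I would apply both composites to a generic element $(a,m)$ and use $\widehat{T}(a,m) = (T(m),0)$: one side returns $(T(\phi(m)),0)$ and the other $(\a(T(m)),0)$, so this requirement is equivalent to $T\phi = \a T$. For the Rota-Baxter identity, the key structural observation is that $\widehat{T}$ reads only the $M$-component of its input and writes only into the $A$-component. Writing $X = (a,u)$ and $Y = (b,v)$, the left-hand side $\widehat{T}(X) \cdot_{l,r} \widehat{T}(Y) = (T(u),0)\cdot_{l,r}(T(v),0)$ collapses to $(T(u)\,T(v),0)$, since the surviving module contributions act on the zero vector. On the right-hand side, computing $\widehat{T}(X)\cdot_{l,r} Y + X \cdot_{l,r} \widehat{T}(Y)$ produces an element whose $M$-component is exactly $l(T(u))v + r(T(v))u$ and whose $A$-component involves $a$ and $b$; applying $\widehat{T}$ then discards that $A$-component and returns $\big(T(l(T(u))v + r(T(v))u),0\big)$. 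Comparing the two sides, the Rota-Baxter identity reduces precisely to $T(u)\,T(v) = T(l(T(u))v + r(T(v))u)$.

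The decisive point, and the only place that needs genuine care, is that the free parameters $a,b \in A$ drop out after $\widehat{T}$ is applied, so that the Rota-Baxter identity quantified over all $X,Y \in A \oplus M$ is equivalent to the $\O$-operator identity quantified over all $u,v \in M$, with no spurious extra constraints creeping in. The hard part will therefore not be any deep argument but rather the bookkeeping of the bimodule actions in the semi-direct product, namely tracking correctly which of $l$ and $r$ lands in which slot of $(x,u)\cdot_{l,r}(y,v)$; once these are placed consistently with the $\O$-operator relation, the two quadratic identities coincide verbatim. Assembling the twist-compatibility equivalence with this identity then yields the desired ``if and only if.''
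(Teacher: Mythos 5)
Your proof is correct, and it is the standard direct computation one would expect here; the paper actually states this proposition without giving any proof, so there is nothing to diverge from. One point worth noting in connection with your ``decisive point'' about slot bookkeeping: the paper's displayed formula for the semi-direct product reads $(x,u)\cdot_{l,r}(y,v)=(x\cdot y,\ l(x,u)+r(v,y))$, which is a typo for the standard $(x\cdot y,\ l(x,v)+r(u,y))$; with the corrected formula your computation $\widehat{T}(X)\cdot_{l,r}Y + X\cdot_{l,r}\widehat{T}(Y) = \bigl(T(u)b + aT(v),\ l(T(u))v + r(T(v))u\bigr)$ goes through exactly as you describe, the $A$-components $a,b$ are killed by $\widehat{T}$, and the equivalence with the $\O$-operator identity together with $T\phi=\a T$ follows.
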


Another characterization of an $\mathcal{O}$-operator can be given in terms of Nijenhuis operator on Hom-associative algebras.

\begin{defn}
Let $(A , \mu,\a)$ be a Hom-associative algebra. A linear map $N : A \rightarrow A$ is said to be a Nijenhuis operator if $N \circ \a=\a \circ N$ and  its Nijenhuis torsion vanishes, i.e.
\begin{align*}
N(x) \cdot N(y) = N ( N(x) \cdot y + x \cdot N(y) - N (x \cdot y) ), ~~ \text{ for all } x, y \in A.
\end{align*}
\end{defn}
Note that the deformed multiplication  $\mu_N : A \otimes A \rightarrow A$ given by $$ \mu_N(x, y) = N(x) \cdot y + x \cdot N(y) - N (x \cdot y),$$ defines a new Hom-associative multiplication on $A$, and $N$ becomes an algebra morphism from $(A, \mu_N,\a)$ to $(A, \mu,\a).$
In addition $\mu$ and $\mu_N$ are compatible, i.e. $\mu+\lambda \mu_N$ still a Hom-associative structure for each $\lambda \in \mathbb{K}$. Then the pair $(\mu,\mu_N)$ becomes a Hom-quantum bi-Hamiltonian system.

The following result is straightforward, so we omit details.
\begin{prop}\label{o-nij}
A linear map $T: M \rightarrow A$ is an $\mathcal{O}$-operator on $(A,\cdot,\a)$ with respect to the bimodule $(M,l,r,\phi)$ if and only if $N_T = \begin{pmatrix}
0 & T \\
0 & 0
\end{pmatrix} : A \oplus M \rightarrow A \oplus M$ is a Nijenhuis operator on the Hom-associative  algebra $A \oplus M$.
\end{prop}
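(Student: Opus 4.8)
The plan is to unravel the two conditions defining a Nijenhuis operator on the semi-direct product $A \oplus M$ applied to $N_T$, and to check that they reproduce exactly the two defining conditions of an $\mathcal{O}$-operator. Throughout I write an element of $A \oplus M$ as a pair $(x,u)$ with $x \in A$, $u \in M$, so that $N_T(x,u) = (T(u),0)$; the key structural feature is that $N_T$ depends only on the $M$-component of its argument and produces an image with vanishing $M$-component, in particular $N_T^2 = 0$. This makes the whole computation a matter of tracking which action terms survive.

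First I would dispose of the compatibility with the structure map. Since the twisting map of $A \oplus M$ is $(\a,\phi)$, a direct computation gives $N_T \circ (\a,\phi)(x,u) = (T\phi(u),0)$ and $(\a,\phi)\circ N_T(x,u) = (\a T(u),0)$. Hence $N_T$ commutes with $(\a,\phi)$ if and only if $T\phi = \a T$, which is precisely the first defining relation of an $\mathcal{O}$-operator.

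Next I would expand the Nijenhuis torsion of $N_T$ on two arbitrary elements $X = (x,u)$ and $Y = (y,v)$. For the left-hand side $N_T(X)\cdot N_T(Y)$, both factors $N_T(X) = (T(u),0)$ and $N_T(Y) = (T(v),0)$ have zero $M$-component, so every action term in the semi-direct product multiplication drops out and one is left with $N_T(X)\cdot N_T(Y) = (T(u)\cdot T(v),0)$. For the right-hand side I would compute the $M$-component of $N_T(X)\cdot Y + X\cdot N_T(Y) - N_T(X\cdot Y)$: here the terms pairing an algebra element with a zero module component again vanish, the $N_T(X\cdot Y)$ term contributes nothing in the $M$-slot, and the only survivors are the two mixed action terms, namely the left action of $T(u)$ on $v$ and the right action of $T(v)$ on $u$. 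Thus the $M$-component equals $l(T(u))v + r(T(v))u$, and since $N_T$ reads off exactly this $M$-component and applies $T$, we obtain $N_T\big(N_T(X)\cdot Y + X\cdot N_T(Y) - N_T(X\cdot Y)\big) = \big(T(l(T(u))v + r(T(v))u),\,0\big)$.

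Finally I would equate the two sides. Comparing $M$-components gives $0=0$ identically, while comparing $A$-components shows that the torsion of $N_T$ vanishes on all of $A \oplus M$ if and only if $T(u)\cdot T(v) = T(l(T(u))v + r(T(v))u)$ for all $u,v \in M$, which is exactly the second defining relation of an $\mathcal{O}$-operator; together with the previous paragraph this yields the claimed equivalence. I do not anticipate a genuine obstacle, as the statement is essentially a bookkeeping identity (this is why the authors call it straightforward); the only point demanding care is correctly identifying which of the action terms in each semi-direct product survive once the vanishing $M$-components of $N_T(X)$ and $N_T(Y)$ are taken into account, and observing that bilinearity reduces the vanishing of the torsion to the single component identity tested on pairs of arbitrary elements.
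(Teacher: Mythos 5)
Your direct verification is correct and is precisely the ``straightforward'' computation the paper alludes to: the paper itself omits the proof of Proposition~\ref{o-nij} entirely, and your unravelling of the two Nijenhuis conditions for $N_T$ on the semi-direct product (using the standard convention $(x,u)\cdot(y,v)=(x\cdot y,\ l(x,v)+r(u,y))$, which is clearly what the authors intend despite the index slip in their displayed formula for $\cdot_{l,r}$) recovers exactly $T\phi=\a T$ and $T(u)\cdot T(v)=T(l(T(u))v+r(T(v))u)$. No gaps.
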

Dendriform structures were first introduced by Loday in his study of the periodicity phenomenons in algebraic $K$-theory \cite{loday}. They become rather popular  in the last 20 years due to their connection with Rota-Baxter algebras, shuffle algebras and combinatorics \cite{aguiar, uchino2, guo-adv}.  In \cite{hom dend}, Makhlouf introduced the notion of Hom-dendriform algebra.

\begin{defn}
A Hom-dendriform algebra is a vector space $D$ together with three linear maps $\prec, \succ : D \otimes D \rightarrow D$ and $\a: D \to D$ satisfying the following three identities
\begin{align}
&( x \prec y ) \prec \a(z) =~ \a(x) \prec ( y \prec z + y \succ z ), \label{homdend-def-1}\\
&( x \succ y ) \prec \a(z) =~ \a(x) \succ ( y \prec z),\\
&( x \prec y + x \succ y ) \succ \a(z) =~ \a(x) \succ ( y \succ z), \label{homdend-def-3}
\end{align}
for all $x , y, z \in D$.
\end{defn}

It follows from (\ref{homdend-def-1})-(\ref{homdend-def-3}) that the new product $\star : D \otimes D \rightarrow D$ defined by $x \star y = x \prec y ~+~ y \succ x$ turns out to be Hom-associative. Thus, a Hom-dendriform algebra can be seen as a splitting of a Hom-associative algebra.

An $\mathcal{O}$-operator has an underlying Hom-dendriform structure on the bimodule.

\begin{prop}\label{o-dend}
Let $T : M \rightarrow A$ be an $\mathcal{O}$-operator on a Hom-associative algebra $A$ with respect to the bimodule $M$. Then the vector space $M$ carries a Hom-dendriform structure with
\begin{align*}
u \prec v =r(T(v))u ~~~ \text{ and } ~~~  u \succ v = l(T(u ))v, ~~~ \text{ for all } u , v \in M.
\end{align*}
Moreover,  $(M,\star_T=\prec+\succ, \phi)$ is a Hom-associative algebra.
\end{prop}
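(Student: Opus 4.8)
The plan is to check the three defining identities of a Hom-dendriform algebra directly, reading each one off a single bimodule axiom together with the two structural consequences of $T$ being an $\O$-operator. First I would record these two consequences: from $T\phi=\a T$ we get $T(\phi(u))=\a(T(u))$, which converts the twist $\phi$ on $M$ into the twist $\a$ on $A$ whenever $T$ is applied; and the defining relation of the $\O$-operator rewrites as $T(u)T(v)=T(u\succ v+u\prec v)=T(u\star_T v)$, expressing that $T$ intertwines $\star_T$ with the multiplication of $A$. With the shorthand $u\prec v=r(u,T(v))$ and $u\succ v=l(T(u),v)$, everything reduces to pushing $T$ through the actions and invoking the correct axiom.

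For the first identity $(x\prec y)\prec\phi(z)=\phi(x)\prec(y\prec z+y\succ z)$, I would expand the left-hand side as $r(r(x,T(y)),\a(T(z)))$ and apply the bimodule axiom $r(\phi(v),ab)=r(r(v,a),\a(b))$ in the direction that collapses the two right actions into $r(\phi(x),T(y)T(z))$; replacing $T(y)T(z)$ by $T(y\star_T z)$ yields exactly the right-hand side. The second identity $(x\succ y)\prec\phi(z)=\phi(x)\succ(y\prec z)$ is handled by the mixed axiom $l(\a(a),r(v,b))=r(l(a,v),\a(b))$: the left-hand side $r(l(T(x),y),\a(T(z)))$ becomes $l(\a(T(x)),r(y,T(z)))=l(T(\phi(x)),y\prec z)$, which is the right-hand side. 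For the third identity $(x\prec y+x\succ y)\succ\phi(z)=\phi(x)\succ(y\succ z)$, I would first use the $\O$-operator relation to write the left-hand side as $l(T(x)T(y),\phi(z))$ and then apply $l(ab,\phi(v))=l(\a(a),l(b,v))$ to obtain $l(\a(T(x)),l(T(y),z))=l(T(\phi(x)),y\succ z)$, the right-hand side. In each case the computation is a single step once the definitions are unwound.

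The \emph{moreover} clause then needs little extra work: having shown that $(M,\prec,\succ,\phi)$ satisfies the three Hom-dendriform axioms, the general fact recalled just after the definition of a Hom-dendriform algebra gives that $\star_T=\prec+\succ$ satisfies the Hom-associativity identity on $M$; equivalently, the $\phi$-twisted associativity of $\star_T$ is exactly the sum of the three identities already verified. To conclude that $(M,\star_T,\phi)$ is a multiplicative Hom-associative algebra in the sense adopted here, I would additionally note that $\phi(u\star_T v)=\phi(u)\star_T\phi(v)$, which follows immediately from the two $\phi$-compatibility axioms $\phi l(x,v)=l(\a(x),\phi(v))$ and $\phi r(v,x)=r(\phi(v),\a(x))$ together with $T\phi=\a T$.

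The only genuinely delicate point, and hence the part to set up carefully at the outset, is the bookkeeping of the two twists: one must keep $\a$ acting on the $A$-arguments and $\phi$ on the $M$-arguments and invoke $T\phi=\a T$ at precisely the right moment to align the two sides. Once this dictionary between the $\a$-twist and the $\phi$-twist is fixed, no identity requires more than one bimodule axiom and one use of the $\O$-operator relation, so there is no substantive obstacle beyond careful symbol-matching.
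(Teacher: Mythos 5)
Your proof is correct: each of the three Hom-dendriform identities follows, exactly as you say, from one bimodule axiom plus the relations $T\phi=\alpha T$ and $T(u)T(v)=T(u\star_T v)$, and the Hom-associativity of $\star_T$ is the sum of the three identities. The paper states this proposition without proof (it is the Hom-version of Uchino's result), and your verification is precisely the standard argument the authors intend, including the correct handling of the $\alpha$/$\phi$ twist bookkeeping.
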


\subsection{Graded Lie algebras, Lift and Bidegree } 
In this section, we discuss a graded Lie algebra related to Hom-associative algebras and its corresponding Maurer-Cartan equation for which a solution characterize an $\mathcal{O}$-operator. It is shown in   \cite{amm-ej-makh} that  a Gerstenhaber algebra structure is constructed on the cohomology of  Hom-associative algebras using a  cup product together with the degree  $-1$ graded Lie bracket.
We aim to show that an $\mathcal{O}$-operator can be characterized by certain solutions of the corresponding Maurer-Cartan equation (see Theorem \ref{new-gla}).

\begin{defn}
  Let $(G=\oplus_{k=1}^\infty G_i,[\cdot,\cdot], d)$ be a differential graded Lie algebra.  A degree $1$ element $\theta\in G_2$ is called a Maurer-Cartan element of $G$ if it
  satisfies the following Maurer-Cartan equation:
  \begin{equation}
  d \theta+ \frac{1}{2} [\theta,\theta]=0.
  \label{eq:mce}
  \end{equation}
  \end{defn}
A graded Lie algebra is a differential graded Lie
algebra with $d=0$. Then we
have

\begin{prop}
Let $(G=\oplus_{k=1}^\infty G_i,[\cdot,\cdot])$ be a graded Lie algebra and let $\mu\in G_2$ be a Maurer-Cartan element. Then the map
$$ d_\mu: G \longrightarrow G, \ d_\mu(\nu):=[\mu, \nu], \quad \forall \nu\in G,$$
is a differential on $G$. For any $\nu\in  G$, the sum $\mu+\nu$ is a
Maurer-Cartan element of the graded Lie algebra $(G, [\cdot,\cdot])$ if and only if $\nu$ is a Maurer-Cartan element of the differential graded Lie algebra $(G, [\cdot,\cdot], d_\mu)$. \label{pp:mce}
\end{prop}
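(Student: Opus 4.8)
The plan is to verify the two assertions directly from the graded Jacobi identity together with the hypothesis that $\mu$ is a Maurer-Cartan element, which for a graded Lie algebra (where $d=0$) reduces to $[\mu,\mu]=0$. Throughout I would adopt the degree convention implicit in the definition above, namely that an element of $G_k$ carries degree $k-1$, so that $\mu\in G_2$ has degree $1$.

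For the first claim, I would begin by recalling that the adjoint map $d_\mu=[\mu,-]$ is automatically a graded derivation of the bracket; this is an immediate consequence of the graded Jacobi identity and uses nothing about the Maurer-Cartan condition. The substantive point is that $d_\mu$ squares to zero. To see this I would apply the graded Jacobi identity to the triple $(\mu,\mu,\nu)$ and simplify using graded antisymmetry $[\nu,\mu]=-(-1)^{|\nu|}[\mu,\nu]$; after collecting terms this produces an identity of the shape $2[\mu,[\mu,\nu]]=-[\nu,[\mu,\mu]]$. Since $[\mu,\mu]=0$, the right-hand side vanishes, whence $d_\mu^2(\nu)=[\mu,[\mu,\nu]]=0$ for every $\nu$. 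Thus $(G,[\cdot,\cdot],d_\mu)$ is a differential graded Lie algebra.

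For the second claim I would restrict attention to $\nu\in G_2$, so that $\mu+\nu$ again lies in $G_2$ and is a legitimate candidate Maurer-Cartan element, and then simply expand the bracket. Bilinearity gives $[\mu+\nu,\mu+\nu]=[\mu,\mu]+[\mu,\nu]+[\nu,\mu]+[\nu,\nu]$; using $|\nu|=1$ and graded antisymmetry the two cross terms coincide, so $[\mu,\nu]+[\nu,\mu]=2[\mu,\nu]=2d_\mu(\nu)$. Because $[\mu,\mu]=0$, dividing by $2$ yields $\tfrac12[\mu+\nu,\mu+\nu]=d_\mu(\nu)+\tfrac12[\nu,\nu]$. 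As the ambient bracket has $d=0$, the left-hand side is exactly the Maurer-Cartan expression for $\mu+\nu$, while the right-hand side is precisely the Maurer-Cartan expression for $\nu$ in the differential graded Lie algebra $(G,[\cdot,\cdot],d_\mu)$; the two therefore vanish simultaneously, which is the asserted equivalence.

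I do not anticipate a serious obstacle, as the entire argument amounts to sign bookkeeping in the graded Jacobi identity, and the Maurer-Cartan hypothesis on $\mu$ enters only to annihilate the term $[\mu,\mu]$ in both parts. The one point demanding care is the sign convention: I would fix the degree shift $|x|=k-1$ for $x\in G_k$ at the outset and confirm that, with this convention, the cross terms in the expansion add rather than cancel, since an error here would flip a sign in the resulting Maurer-Cartan equation.
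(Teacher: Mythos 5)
Your argument is correct: the paper states this proposition without proof (it is invoked as a standard fact about Maurer--Cartan elements), so there is nothing to compare against, but your derivation is exactly the expected one. Both steps check out with the degree convention $|x|=k-1$ for $x\in G_k$: the graded Jacobi identity applied to $(\mu,\mu,\nu)$ gives $2[\mu,[\mu,\nu]]=[[\mu,\mu],\nu]=0$, and for $\nu\in G_2$ graded antisymmetry makes the cross terms in $[\mu+\nu,\mu+\nu]$ add, yielding $\tfrac12[\mu+\nu,\mu+\nu]=d_\mu(\nu)+\tfrac12[\nu,\nu]$; your restriction to $\nu\in G_2$ (implicit in the statement, since only degree-$1$ elements can be Maurer--Cartan) and the tacit assumption that $2$ is invertible in $\mathbb{K}$ are the only points worth flagging.
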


Let $A$ be a vector space and $\a: A \to A$  be a linear map.  Denote by  $V_n^{\a}$ the set of all linear maps $f:A^{\otimes n} \to A$ satisfying
$$\a (f(a_1,\cdots,a_{n}))=f(\a(a_1),\cdots,\a(a_{n})),~~~~\textrm{for all}\ a_i \in A.$$
The graded space $G=\oplus_{n\geq 1} V_n^\a$  carries a graded  Lie  algebra structure
$[-,-]_\a: V_m^{\a} \times V_n^{\a} \to V_{m+n-1}^{\a}$ given by
{\small$$[f,g]_\a=f \circ g-(-1)^{(m-1)(n-1)}g \circ f,~~~~\textrm{for all }\ f \in V_m^{\a}, g \in V_n^{\a},$$}
where
\begin{align*}
& (f \circ g ) (a_1, \ldots, a_{m+n-1} ) = \\ &  \sum_{i = 1}^{m} (-1)^{(i-1)(n-1)}~f ( \a^{n-1}(a_1), \ldots, \a^{n-1}(a_{i-1}), g ( a_i, \ldots, a_{i+n-1}), \ldots, \a^{n-1}(a_{m+n-1})).
\end{align*}
Let $\mu \in V_2^\a$. By a direct computation, we have
\begin{align*}
\frac{1}{2}[\mu,\mu]_\a(x,y,z)=\mu \circ \mu(x,y,z)=\mu(\mu(x,y),\a(z))-\mu(\a(x),\mu(y,z)),
\end{align*}
which implies that $\mu$ defines a Hom-associative structure on $A$ if an only if $[\mu,\mu]_\a=0$, i.e. $\mu$ is a Maurer-Cartan element of the graded Lie algebra
$G$.\\ 

In the following, we introduce the notion of bidegree and discuss it with respect to Gerstenhaber bracket.
Let $\A_{1}$ and $\A_{2}$ be two vector spaces 
and  $c:\A^{\ot n}_{2}\to\A_{1}$ be a linear map,
or a cochain in $C^{n}(\A_{2},\A_{1})$.
We  construct a cochain
$\hat{c}\in C^{n}(\A_{1}\oplus\A_{2})$ by
$$
\hat{c}\Big((a_{1},x_{1})\ot...\ot(a_{n},x_{n})\Big)
:=(c(x_{1},...,x_{n}),0).
$$
In general, for a given multilinear map
$f:\A_{i_1}\ot\A_{i_2}\ot...\ot\A_{i_n}\to\A_{j}$,
$i_1,...,i_n,j\in\{1,2\}$,
we define a cochain $\hat{f}\in C^{n}(\A_{1}\oplus\A_{2})$ by
\begin{eqnarray*}
\hat{f}:=
\left\{
\begin{array}{ll}
f & \text{on $\A_{i_1}\ot\A_{i_2}\ot...\ot\A_{i_n}$,} \\
0  & \text{all other cases.}
\end{array}
\right.
\end{eqnarray*}
We call the cochain $\hat{f}$ a horizontal
lift of $f$, or simply a lift.
For instance, the lifts
of $\a:\A_{1}\ot\A_{1}\to\A_{1}$,
$\b:\A_{1}\ot\A_{2}\to\A_{2}$ and
$\gamma:\A_{2}\ot\A_{1}\to\A_{2}$
are defined  respectively by
\begin{eqnarray}
\label{exabg1}\hat{\a}((a,x),(b,y))&=&(\a(a,b),0),\\
\label{exabg2}\hat{\b}((a,x),(b,y))&=&(0,\b(a,y)),\\
\label{exabg3}\hat{\gamma}((a,x),(b,y))&=&(0,\gamma(x,b)).
\end{eqnarray}
Let $H:\A_{2}\to\A_{1}$ (resp. $H:\A_{1}\to\A_{2}$)
be a 1 cochain. The lift is defined by
$$
\what{H}(a,x)=(H(x),0) \ \ \ (\text{resp. $\what{H}(a,x)=(0,H(a))$}).
$$
For any $(a,x)\in\A_{1}\oplus\A_{2}$,

We denote by $\A^{l,k}$
the direct sum of all $(l+k)$-tensor powers of $\A_{1}$ and $\A_{2}$,
where $l$ (resp. $k$) is the number of $\A_{1}$ (resp. $\A_{2}$).
For instance,
$$
\A^{1,2}:=
(\A_{1}\ot\A_{2}\ot\A_{2})\oplus
(\A_{2}\ot\A_{1}\ot\A_{2})\oplus
(\A_{2}\ot\A_{2}\ot\A_{1}).
$$
The tensor space $(\A_{1}\oplus\A_{2})^{\ot n}$
is expanded into the direct sum of $\A^{l,k}$, $l+k=n$.
For instance,
$$
(\A_{1}\oplus\A_{2})^{\ot 2}=
\A^{2,0}\oplus\A^{1,1}\oplus\A^{0,2}.
$$
Let $f$ be a $n$-cochain in $C^{n}(\A_{1}\oplus\A_{2})$.
We say that the {\em bidegree} of $f$ is $k|l$
if $f$ is an element in $C^{n}(\A^{l,k-1},\A_{1})$ or
in $C^{n}(\A^{l-1,k},\A_{2})$, where $n=l+k-1$.
We denote the bidegree of $f$ by $||f||=k|l$.
In general, cochains do not have bidegree.
We call a cochain $f$ a {\em homogeneous cochain},
if $f$ has a bidegree.\\
\indent
We have $k+l\ge 2$, because $n\ge 1$.
Thus there are no cochains of
bidegree $0|0$ or $1|0$ or $0|1$.
We recall
$\hat{\a},\hat{\b},\hat{\gamma}\in C^{2}(\A_{1}\oplus\A_{2})$
in (\ref{exabg1}), (\ref{exabg2}) and (\ref{exabg3}).
One can easily see
$||\hat{\a}||=||\hat{\b}||=||\hat{\gamma}||=1|2$.

\begin{lemma}
If $||f||=k|0$ (resp. $0|k$) and $||g||=l|0$ (resp. $0|l$),
then $[f,g]_\a=0$.
\end{lemma}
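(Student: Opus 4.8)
The plan is to prove the stronger statement that each of the two composites making up the bracket vanishes on its own: since $[f,g]_\a=f\circ g-(-1)^{(m-1)(n-1)}g\circ f$, it will then be a difference of zeros. First I would translate the two pure bidegrees into concrete support conditions. If a cochain $h$ has bidegree $||h||=k|0$, then in the defining alternative $h\in C^n(\A^{l,k-1},\A_1)$ or $h\in C^n(\A^{l-1,k},\A_2)$ one must have $l=0$; the second option would require $\A^{-1,k}$ and is impossible, so $h\in C^n(\A^{0,k-1},\A_1)=C^n(\A_2^{\ot(k-1)},\A_1)$. Concretely, $h$ is nonzero only on tensors all of whose entries lie in $\A_2$, and its values land in $\A_1$. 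Dually, $||h||=0|k$ forces $h\in C^n(\A^{k-1,0},\A_2)=C^n(\A_1^{\ot(k-1)},\A_2)$, that is, $h$ is supported on all-$\A_1$ inputs and outputs into $\A_2$. In the first case of the lemma both $f$ and $g$ are of the first type, and in the second case both are of the dual type.

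The second ingredient I would record is that the twisting map on $\A_1\oplus\A_2$ is the diagonal map $(\a,\phi)$, so it and all its powers $\a^{n-1}$ preserve the splitting $\A_1\oplus\A_2$ and send a homogeneous tensor to one of the same type. Combined with multilinearity, this reduces the verification of $f\circ g=0$ (and $g\circ f=0$) to testing on inputs each of which lies purely in $\A_1$ or purely in $\A_2$.

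The heart of the argument is then a single-slot type mismatch. Take $||f||=k|0$ and $||g||=l|0$. In a given insertion term of $(f\circ g)(a_1,\dots,a_{m+n-1})$ the value $g(a_i,\dots,a_{i+n-1})$ occupies one argument slot of $f$. For that term to survive, $g$ must be evaluated on all-$\A_2$ entries, whence its output lies in $\A_1$; but then the $i$-th argument fed to $f$ lies in $\A_1$, while $f$ vanishes unless every one of its arguments lies in $\A_2$. Hence the term is zero, every insertion term is zero, and $f\circ g=0$; the identical reasoning gives $g\circ f=0$, so $[f,g]_\a=0$. The case $||f||=0|k$, $||g||=0|l$ is obtained after interchanging the roles of $\A_1$ and $\A_2$: now $g$ outputs into $\A_2$ while $f$ demands $\A_1$ in each slot, and the same slot-wise obstruction applies.

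I expect the only delicate point to be the bookkeeping in the first step, namely pinning down that a pure bidegree $k|0$ (resp.\ $0|k$) is exactly the condition ``all inputs from $\A_2$ (resp.\ $\A_1$), output in $\A_1$ (resp.\ $\A_2$)''. Once this is correctly extracted, the vanishing is forced by the mismatch in the inserted slot alone, so the surviving $\a^{n-1}$-twisted arguments and the sign $(-1)^{(m-1)(n-1)}$ are irrelevant, each term already being zero for a one-slot reason.
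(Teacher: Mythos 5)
Your proof is correct: the reduction of bidegree $k|0$ (resp.\ $0|k$) to ``all inputs in $\A_2$ (resp.\ $\A_1$), output in $\A_1$ (resp.\ $\A_2$)'' is exactly right, and the single-slot type mismatch in each insertion term of $f\circ g$ and $g\circ f$ does force both composites to vanish, independently of the $\a^{n-1}$-twists and the sign. The paper states this lemma without proof (it is taken as evident, following Uchino's derived-bracket setup), so your argument simply supplies the omitted verification in the natural way.
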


\begin{prop}\label{derivedbracket} Let $f,g\in C^{\bullet}(\A_{1}\oplus\A_{2})$.
If $||f||=k_{f}|l_{f}$ and $||g||=k_{g}|l_{g}$,
then the Gerstenhaber bracket $[f,g]_\a$ has the bidegree
$k_{f}+k_{g}-1|l_{f}+l_{g}-1$.
\end{prop}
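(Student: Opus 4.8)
The plan is to reduce everything to the non-symmetrised composition $f\circ g$. Since $[f,g]_\a=f\circ g-(-1)^{(m-1)(n-1)}g\circ f$ with $m=k_f+l_f-1$ and $n=k_g+l_g-1$, and since the term $g\circ f$ is obtained from $f\circ g$ by interchanging the roles of the two cochains, it is enough to prove that $f\circ g$ is homogeneous of bidegree $k_f+k_g-1\,|\,l_f+l_g-1$; the same count with $f$ and $g$ swapped gives $g\circ f$ the identical bidegree, so the bracket, being a linear combination of two cochains of that bidegree, inherits it. I may assume $f$ and $g$ are homogeneous, and I decompose each according to its output: a cochain of bidegree $k\,|\,l$ is the sum of an $\A_1$-valued part, supported on $\A^{l,k-1}$, and an $\A_2$-valued part, supported on $\A^{l-1,k}$. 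By bilinearity of $\circ$ it then suffices to treat the four combinations of output types of $f$ and $g$ one at a time.

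The heart of the matter is to count, in a nonvanishing summand of $f\circ g$, how many tensor factors of $\A_1$ and of $\A_2$ occur. First I note that the twisting map $\a$ on $\A_1\oplus\A_2$ respects the splitting, so each entry $\a^{n-1}(a_j)$ in the defining formula lies in the same summand as $a_j$; the powers of $\a$ are therefore invisible to the bidegree and may be ignored in the count. Fix the term in which $g$ is plugged into the $i$-th slot of $f$. For it to be nonzero, the $n$ consecutive arguments fed to $g$ must match the input profile of (the chosen part of) $g$, while the output of $g$ together with the remaining $m-1$ arguments of $f$ must match the input profile of $f$.

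The key observation is that the single output slot of $g$ is identified with exactly one input slot of $f$, of the summand ($\A_1$ or $\A_2$) into which $g$ takes values; this identification is precisely what removes one $\A_1$-factor and one $\A_2$-factor from the naive total and so produces the simultaneous drop by one in both entries of the bidegree. For instance, when both $f$ and $g$ have output in $\A_1$, the arguments of $g$ contribute $l_g$ copies of $\A_1$ and $k_g-1$ copies of $\A_2$, the other arguments of $f$ contribute $l_f-1$ copies of $\A_1$ and $k_f-1$ copies of $\A_2$ (one $\A_1$-slot of $f$ being occupied by the $\A_1$-valued output of $g$), and the result lands in $\A_1$; hence this term maps $\A^{l_f+l_g-1,\,k_f+k_g-2}\to\A_1$, which is exactly the $\A_1$-valued part of a cochain of bidegree $k_f+k_g-1\,|\,l_f+l_g-1$. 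The three remaining combinations of output types are handled by the identical bookkeeping and each yields a term of the same bidegree.

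I expect the only genuine difficulty to be this slot-counting across the four cases, and in particular the verification that identifying the output of $g$ with one argument of $f$ decreases both $k$ and $l$ by one. The facts that $\a$ preserves the two summands (so the twist does not interfere with the count) and that the symmetry $f\leftrightarrow g$ passes the bidegree of $f\circ g$ to the full bracket are routine.
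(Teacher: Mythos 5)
Your proof is correct, but there is nothing in the paper to compare it with: Proposition \ref{derivedbracket} is stated without any proof (it is imported from Uchino's bidegree formalism; a proof along exactly your lines appears in Das's work on deformations of associative Rota--Baxter operators, cited as \cite{def O-op on ass al}). Your argument --- reduce to $f\circ g$ via the $f\leftrightarrow g$ symmetry of the claimed bidegree, split each homogeneous cochain into its $\A_{1}$-valued part on $\A^{l,k-1}$ and its $\A_{2}$-valued part on $\A^{l-1,k}$, and in each of the four cases observe that a summand of $f\circ g$ survives only if the output summand of $g$ matches the type of the slot of $f$ it fills, so that exactly one factor of that type is consumed --- is the standard bookkeeping proof, and your slot count is right in all four cases.

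Two points should be made explicit rather than asserted. First, your remark that ``$\a$ respects the splitting'' is not true for an arbitrary linear map on $\A_{1}\oplus\A_{2}$; it holds here because the relevant twist is the diagonal map $\a+\phi$, $(x,u)\mapsto(\a(x),\phi(u))$, and this hypothesis is precisely what makes the powers of the twist invisible to the count. Second, the paper's definition of homogeneity says that $f$ lies in $C^{n}(\A^{l,k-1},\A_{1})$ \emph{or} in $C^{n}(\A^{l-1,k},\A_{2})$; what your computation actually shows (and what is true) is that $[f,g]_\a$ is the \emph{sum} of an $\A_{1}$-valued piece supported on $\A^{l_{f}+l_{g}-1,k_{f}+k_{g}-2}$ and an $\A_{2}$-valued piece supported on $\A^{l_{f}+l_{g}-2,k_{f}+k_{g}-1}$, and when $f$ and $g$ have different output types both pieces are in general nonzero. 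So the proposition must be read with the ``sum'' convention for bidegree --- the same convention the paper already needs for its basic example, since $\mu+l+r$ has a nonzero $\A_{1}$-valued part ($\hat{\mu}$) and a nonzero $\A_{2}$-valued part ($\hat{l}+\hat{r}$) and is nevertheless assigned bidegree $1|2$. With that reading your proof is complete; the degenerate cases where the claimed bidegree would require a negative exponent (e.g.\ both $f$ and $g$ of bidegree $k|0$) are exactly the cases where every term vanishes, which recovers the Lemma preceding the proposition.
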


\section{Cohomologies of $\O$-operators on Hom-associative algebras}

\subsection{Cohomology of $\mathcal{O}$-operator}

Let $A$ and $M$ be two vector spaces equipped with maps $\mu : A^{\otimes 2} \rightarrow A$, $l : A \otimes M \rightarrow M$,  $r : M \otimes A \rightarrow M$, $\a:A\to A$ and $\phi:M\to M$.  Set $V= A \oplus M$. It is, naturally equipped with the linear map $\a+\phi: (x,u)\mapsto (\a(x),\phi(u))$.  Now,
consider the graded Lie algebra structure on $\mathfrak{g} = \oplus_{n \geq 1} V_n^{\a+\phi}$ Observe that the elements $\mu , l, r \in V_2^{\a+\phi}$. Therefore, $\mu + l + r \in V_2^{\a+\phi}$

\begin{prop}\label{mlr}
The product $\mu$ defines a Hom-associative structure on $A$ and $l, r$ define an $A$-bimodule structure on $M$ with respect to $\phi$ if and only if $\mu + l + r \in V_2^{\a+\phi}$ is a Maurer-Cartan element in $\mathfrak{g}.$
\end{prop}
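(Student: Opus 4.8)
The plan is to expand the Maurer--Cartan equation $[\mu+l+r,\mu+l+r]_\a=0$ and read off its homogeneous components one by one. Write $\theta=\mu+l+r$, where the three maps are tacitly replaced by their lifts $\what\mu,\what l,\what r\in C^2(A\oplus M)$; evaluating the lift formulas gives
\begin{align*}
\theta\big((x,u),(y,v)\big)=\big(\mu(x,y),\ l(x,v)+r(u,y)\big),\qquad x,y\in A,\ u,v\in M,
\end{align*}
which is precisely the split extension (semi-direct product) multiplication on $A\oplus M$. First I would record that the requirement $\theta\in V_2^{\a+\phi}$ is itself content: unwinding the equivariance $(\a+\phi)\circ\theta=\theta\circ\big((\a+\phi)\ot(\a+\phi)\big)$ on the three lifts produces exactly the multiplicativity $\a\circ\mu=\mu\circ(\a\ot\a)$ together with $\phi\,l(x,v)=l(\a(x),\phi(v))$ and $\phi\,r(v,x)=r(\phi(v),\a(x))$. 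Hence membership in $V_2^{\a+\phi}$ already encodes multiplicativity of $\mu$ and the first two bimodule axioms.

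For the remaining content I would use the identity established in Section~2, namely that for $\theta\in V_2^{\a+\phi}$ one has
\begin{align*}
\tfrac12[\theta,\theta]_\a(X,Y,Z)=\theta\big(\theta(X,Y),(\a+\phi)(Z)\big)-\theta\big((\a+\phi)(X),\theta(Y,Z)\big)
\end{align*}
for $X,Y,Z\in A\oplus M$, so $\theta$ is a Maurer--Cartan element if and only if $\theta$ is a Hom-associative product on $V=A\oplus M$. The quickest finish is then to invoke the split extension characterization recalled in Section~2: this product is Hom-associative on $A\oplus M$ exactly when $\mu$ is Hom-associative on $A$ and $(M,l,r,\phi)$ is an $A$-bimodule, which is the asserted equivalence.

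To make the argument self-contained I would instead expand $\tfrac12[\theta,\theta]_\a=\theta\circ\theta$ through the bidegree calculus of the previous subsection. Each of $\what\mu,\what l,\what r$ has bidegree $1|2$, so by Proposition~\ref{derivedbracket} the $3$-cochain $\theta\circ\theta$ has bidegree $1|3$; that is, it is determined by its $A$-valued restriction to $A^{\ot3}$ and its $M$-valued restriction to $\A^{2,1}$, and the latter splits further according to the position of the single $M$-argument. Using that every lift annihilates an input placed in a slot of the wrong type, the four homogeneous pieces become: the $A^{\ot3}\to A$ part, collecting only $\what\mu\circ\what\mu$, gives Hom-associativity of $\mu$; the $A\ot A\ot M\to M$ part, collecting $\what l\circ\what\mu$ and $\what l\circ\what l$, gives $l(xy,\phi(v))=l(\a(x),l(y,v))$; the $M\ot A\ot A\to M$ part, collecting $\what r\circ\what\mu$ and $\what r\circ\what r$, gives $r(\phi(v),xy)=r(r(v,x),\a(y))$; and the $A\ot M\ot A\to M$ part, collecting $\what l\circ\what r$ and $\what r\circ\what l$, gives $l(\a(x),r(v,y))=r(l(x,v),\a(y))$. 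Thus $[\theta,\theta]_\a=0$ is equivalent to Hom-associativity together with the three outstanding bimodule axioms, which with the first paragraph proves both directions. I expect the only real obstacle to be the bookkeeping here: carrying the twist factors $\a^{\,n-1}$ and the signs $(-1)^{(i-1)(n-1)}$ through the circle product, and verifying that the lift convention genuinely kills every term in which the output of an inner operator---valued in $A$ for $\what\mu$ and in $M$ for $\what l,\what r$---is fed into a slot of the opposite type; once these vanishings are in hand, each surviving summand matches a defining identity verbatim.
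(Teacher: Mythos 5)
Your proposal is correct and follows essentially the same route as the paper: both reduce the Maurer--Cartan condition to $\tfrac12[\theta,\theta]_\a=\theta\circ\theta$ for $\theta=\mu+l+r$ viewed as the semi-direct product multiplication on $A\oplus M$, and then read off the Hom-associativity of $\mu$ and the bimodule identities from the vanishing of the $A$- and $M$-valued components (the paper does this by one direct evaluation on a general triple $((a_1,m_1),(a_2,m_2),(a_3,m_3))$, which is your bidegree decomposition carried out all at once). Your additional remark that membership in $V_2^{\a+\phi}$ already encodes multiplicativity of $\mu$ and the $\phi$-equivariance of $l,r$ is a correct refinement the paper leaves implicit.
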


\begin{proof}
By  a direct computation, we have
\begin{align*}
&\frac{1}{2}[\mu + l + r,  \mu + l + r ]_\a((a_1 , m_1), (a_2 , m_2), (a_3, m_3)) \\
=&(\mu + l + r) \circ ( \mu + l + r )((a_1 , m_1), (a_2 , m_2), (a_3, m_3)\\
 =&(\mu + l + r) \big( (\mu+ l + r) ((a_1 , m_1), (a_2 , m_2)), (\a(a_3), \phi(m_3)) \big) \\
&- (\mu + l + r) \big(  (\a(a_1), \phi(m_1)), (\mu+ l + r ) ((a_2 , m_2), (a_3 , m_3)) \big)\\
=&\big( \mu(\mu(a_1 ,a_2) ,\a(a_3)),~  l(\mu(a_1, a_2),\phi( m_3)) + r(l(a_1, m_2), \a(a_3)) + r(r(m_1, a_2 ), \a(a_3)) \big)\\
& - \big(  \mu(\a(a_1),\mu (a_2 ,a_3)),~ l(\a(a_1),l (a_2, m_3)) + l(\a(a_1) ,r(m_2 ,a_3) ) + r(r(m_1, a_2 ) ,\a(a_3))   \big).
\end{align*}
Then $\mu$ defines a Hom-associative structure on $A$ and $l, r$ define an $A$-bimodule structure on $M$ with respect to $\phi$ if and only if $\mu + l + r $ is a Maurer-Cartan element in $\mathfrak{g}.$
\end{proof}

We aim in the following the Voronov's derived bracket construction \cite{voro} (see also \cite{uchino3}) to get a graded Lie algebra structure on $\oplus_{n \geq 1} \text{Hom} (M^{\otimes n}, A )$, where $A$ is  a regular Hom-associative algebra and $(l, r)$ defines an $A$-bimodule structure on $M$ with respect to $\phi$ supposed to be invertible.

According to  the above proposition, the graded Lie algebra $(\mathfrak{g}, [~, ~] )$ together with the differential $d_{\mu + l + r } = [ \mu + l + r, ~ ]_\a$ becomes a differential graded Lie algebra.
Moreover, it is easy to see that the graded subspace $\oplus_{n \geq 1} \text{Hom} (M^{\otimes n}, A )$ is an abelian subalgebra. 

Define, for $P \in \text{Hom}(M^{\otimes m}, A), ~ Q \in \text{Hom}(M^{\otimes n}, A)$, $m , n \geq 1$,  the  bracket
\begin{align}
\llbracket P, Q \rrbracket_\a := (-1)^m [ [ \mu + l + r , P ]_\a, Q ]_\a,
\end{align}
It is well defined according to Proposition \ref{derivedbracket}, $\llbracket -, - \rrbracket_\a$ and may be explicitly given by
\begin{align}\label{derived-bracket}
&\llbracket P, Q \rrbracket_\a  (u_1, \ldots, u_{m+n}) \\
&= \sum_{i=1}^m (-1)^{(i-1) n} P (\phi^m(u_1), \ldots, \phi^m(u_{i-1}), l(Q (u_i, \ldots, u_{i+n-1}),\phi^{m-1}( u_{i+n})), \ldots, \phi^{m}(u_{m+n} )) \nonumber\\
~&- \sum_{i=1}^m (-1)^{in} P (\phi^{m}(u_1), \ldots, \phi^{m}(u_{i-1}), r(\phi^{m-1}(u_i), Q (u_{i+1}, \ldots, u_{i+n})), \phi^{m}(u_{i+n+1}), \ldots, \phi^{m}(u_{m+n})) \nonumber\\
&- (-1)^{mn} \bigg\{ \sum_{i=1}^n (-1)^{(i-1)m}~ Q (\phi^{n}(u_1), \ldots, \phi^{n}(u_{i-1}), l(P (u_i, \ldots, u_{i+m-1}), \phi^{n-1}(u_{i+m})), \ldots, \phi^{n}(u_{m+n})) \nonumber\\
&- \sum_{i=1}^n (-1)^{im}~ Q (\phi^{n}(u_1), \ldots, \phi^{n}(u_{i-1}), r(\phi^{n-1}(u_i), P (u_{i+1}, \ldots, u_{i+m})) , \phi^{n}(u_{i+m+1}), \ldots, \phi^{n}(u_{m+n}) ) \bigg\} \nonumber\\
&+ (-1)^{mn} \bigg\{\mu( P(\phi^{n-1}(u_1), \ldots, \phi^{n-1}(u_m)) , Q (\phi^{m-1}(u_{m+1}), \ldots, \phi^{m-1}(u_{m+n}) ))
 \nonumber\\&- (-1)^{mn}~\mu( Q (\phi^{m-1}(u_1), \ldots, \phi^{m-1}(u_n)) , P ( \phi^{n-1}(u_{n+1}), \ldots, \phi^{n-1}(u_{m+n}) )) \bigg\}, \nonumber
\end{align}
for $u_1, \ldots, u_{m+n} \in M.$ We may extend the graded Lie bracket to $\oplus_{n \geq 0} \text{Hom} (M^{\otimes n}, A )$ as follows.
For $P \in \text{Hom}(M^{\otimes m}, A)$ and $a \in A$, the bracket is
\begin{align}\label{derived-bracket-0}
&\llbracket P, a \rrbracket_\a (u_1, \ldots, u_m )  \\
=& \sum_{i=1}^m P (\phi^m(u_1), \ldots, \phi^m(u_{i-1}), l(a,\phi^{m-1}(u_i)) - r(\phi^{m-1}(u_i),a) , \phi^m(u_{i+1}), \ldots, \phi^m(u_{m}) ) \nonumber\\
~~&+\mu( P (\phi^{-1}(u_1), \ldots, \phi^{-1}(u_m)) ,a) - \mu(a, \cdot P (\phi^{-1}(u_1), \ldots, \phi^{-1}(u_m)) \nonumber
\end{align}
and for $a, b \in A$, we define $\llbracket a, b \rrbracket_\a$ to be the Lie bracket $[a, b ]_C = \mu(a,b) - \mu(b, a).$

Thus, for any $T, T' \in \text{Hom} (M, A)$, we have from (\ref{derived-bracket}) that
\begin{align}\label{t-t}
\llbracket T, T' \rrbracket _\a(u, v) = T (T'(u)v ) + T(u T'(v)) + T' (T(u)v) + T' (u T(v)) - T(u) \cdot T'(v) - T'(u) \cdot T(v).
\end{align}

For any $n \geq 0$, we define $C^n_\a (M, A) := \mathrm{Hom} (M^{\otimes n}, A)$ and consider the graded vector space
$C^\bullet (M, A ) = \oplus_{n \geq 0}  C^n_\a (M, A) =   \oplus_{n \geq 0}  \mathrm{Hom} (M^{\otimes n}, A)$. Thus, we obtain the following result.

\begin{thm}\label{new-gla}
The graded vector space $C^\bullet_\a (M, A ) = \oplus_{n \geq 0} \mathrm{Hom} (M^{\otimes n}, A)$ together with the above defined bracket $\llbracket ~, ~\rrbracket_\a$ form a graded Lie algebra. A linear map $T : M \rightarrow A$ is an $\mathcal{O}$-operator on $A$ with respect to the bimodule $M$ if and only if $T \in C^1_\a (M, A)$ is a Maurer-Cartan element in $(C^\bullet_\a (M, A ), \llbracket~,~\rrbracket_\a )$, i.e. $T$ satisfies $\llbracket T, T \rrbracket _\a= 0$.
\end{thm}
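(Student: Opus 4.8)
The plan is to establish the theorem in two logically separate parts, exploiting the machinery of Voronov's derived bracket that the paper has already set up. The first part is to verify that $(C^\bullet_\a(M,A), \llbracket~,~\rrbracket_\a)$ is a graded Lie algebra; the second is to identify the Maurer-Cartan equation $\llbracket T, T\rrbracket_\a = 0$ with the $\mathcal{O}$-operator condition. The key structural input is Proposition~\ref{mlr}, which tells us that $\mu + l + r$ is a Maurer-Cartan element of the big graded Lie algebra $(\mathfrak{g}, [~,~]_\a)$ on $V = A \oplus M$, together with the bidegree calculus of Proposition~\ref{derivedbracket} and the fact (noted just before the theorem) that $\oplus_{n\geq 1}\mathrm{Hom}(M^{\otimes n}, A)$ is an abelian subalgebra of $\mathfrak{g}$.

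For the graded Lie algebra structure, I would invoke the general derived bracket formalism: given a graded Lie algebra $L$ with an abelian subalgebra $\mathfrak{a}$ and a Maurer-Cartan element $\Delta$ whose adjoint action preserves a suitable complement, the derived bracket $\llbracket P, Q\rrbracket := \pm[[\Delta, P], Q]$ restricts to a genuine graded Lie bracket on $\mathfrak{a}$. Here $\Delta = \mu + l + r$ and $\mathfrak{a} = \oplus_{n\geq 1}\mathrm{Hom}(M^{\otimes n}, A)$. The essential point to check is closure: for $P \in \mathrm{Hom}(M^{\otimes m}, A)$ with bidegree $\|P\| = 0|(m+1)$ and $Q$ analogously, Proposition~\ref{derivedbracket} controls the bidegree of the iterated bracket and guarantees $\llbracket P, Q\rrbracket_\a$ again lands in $\mathrm{Hom}(M^{\otimes(m+n)}, A)$, i.e. has the right bidegree. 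Graded antisymmetry and the graded Jacobi identity for $\llbracket~,~\rrbracket_\a$ then follow formally from those of $[~,~]_\a$ together with $[\Delta,\Delta]_\a = 0$ and the abelian property of $\mathfrak{a}$; this is the standard Voronov computation, so I would state it as an application of \cite{voro} rather than reprove it. The extension of the bracket to $C^0_\a(M,A) = A$ via formulas (\ref{derived-bracket-0}) must be checked to be compatible, but this is again a bidegree bookkeeping matter.

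For the characterization of $\mathcal{O}$-operators, I would specialize to $T, T' \in \mathrm{Hom}(M, A) = C^1_\a(M, A)$ and use the explicit formula (\ref{t-t}) already derived in the text. Setting $T' = T$ and collecting terms gives
\begin{align*}
\tfrac{1}{2}\llbracket T, T\rrbracket_\a(u, v) = T\big(l(T(u))v + r(T(v))u\big) - T(u)\cdot T(v),
\end{align*}
after combining the symmetric contributions. Thus $\llbracket T, T\rrbracket_\a = 0$ holds if and only if $T(u)\cdot T(v) = T(l(T(u))v + r(T(v))u)$ for all $u, v \in M$, which is precisely the defining identity of an $\mathcal{O}$-operator (the compatibility $T\phi = \a T$ being automatic since $T \in V_1^{\a+\phi}$).

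The main obstacle I anticipate is the first part rather than the second. The computation (\ref{t-t}) makes the Maurer-Cartan equivalence essentially immediate once one trusts the sign conventions, but verifying that the derived bracket genuinely closes and satisfies the graded Jacobi identity requires either a careful appeal to the hypotheses of Voronov's theorem (one must confirm that $\mu + l + r$ really is a Maurer-Cartan element and that the Hom-twisting by powers of $\phi$ and $\a$ does not disrupt the abelian-subalgebra condition) or a direct check using Proposition~\ref{derivedbracket}. The presence of the twisting maps $\a, \phi$ and their inverses (note $\phi^{-1}$ appearing in (\ref{derived-bracket-0})) is what distinguishes this Hom-setting from the classical case, and ensuring the bidegree estimates survive this twisting is the delicate step; I would handle it by verifying that each $V_n^{\a+\phi}$ is preserved by the relevant operations, so that the abstract derived-bracket theorem applies verbatim.
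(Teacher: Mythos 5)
Your proposal matches the paper's (implicit) argument: the paper offers no separate proof of Theorem \ref{new-gla}, relying exactly on Voronov's derived bracket construction applied to the Maurer--Cartan element $\mu+l+r$ of Proposition \ref{mlr}, the abelian subalgebra $\oplus_{n\geq 1}\mathrm{Hom}(M^{\otimes n},A)$, the bidegree count of Proposition \ref{derivedbracket}, and the specialization (\ref{t-t}) with $T'=T$. Your reconstruction is correct (modulo a harmless transposition of the two slots in the bidegree notation $k|l$), so nothing further is needed.
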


Therefore, from the general principle of Maurer-Cartan elements, we obtain.

\begin{thm}\label{new-gla-2}
Let $T : M \rightarrow A$ be an $\mathcal{O}$-operator on $A$ with respect to the bimodule $M$. Then $T$ induces a differential $d_T = \llbracket T, ~ \rrbracket_\a$ which makes the graded Lie algebra $( C^\bullet _\a(M, A ), \llbracket ~,~\rrbracket_\a)$ into a differential graded Lie algebra (dgLa).

Moreover, for any linear map $T' : M \rightarrow A$, the sum $T + T'$ is still an $\mathcal{O}$-operator if and only if $T'$ is a Maurer-Cartan element in the dgLa $( C^\bullet_\a (M, A ), \llbracket ~,~ \rrbracket_\a, d_T)$, that is 
\begin{align*}
\llbracket T + T' , T + T' \rrbracket _\a= 0  ~ \Longleftrightarrow ~ d_T (T') + \frac{1}{2} \llbracket T', T' \rrbracket _\a= 0.
\end{align*}
\end{thm}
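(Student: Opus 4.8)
The plan is to recognize both assertions as direct instances of the general Maurer--Cartan principle recorded in Proposition~\ref{pp:mce}, applied to the graded Lie algebra $(C^\bullet_\a(M,A),\llbracket\,,\,\rrbracket_\a)$ furnished by Theorem~\ref{new-gla}. First I would use Theorem~\ref{new-gla} to restate the hypothesis in the language needed here: to say that $T$ is an $\O$-operator on $A$ with respect to $M$ is exactly to say that $T\in C^1_\a(M,A)$ satisfies $\llbracket T,T\rrbracket_\a=0$, i.e. that $T$ is a Maurer--Cartan element of this graded Lie algebra.

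Granting this, the first assertion is immediate from Proposition~\ref{pp:mce}: twisting a graded Lie algebra by a Maurer--Cartan element $T$ produces a differential $d_T=\llbracket T,\,\cdot\,\rrbracket_\a$, the identity $d_T^2=0$ being the formal consequence of the graded Jacobi identity together with $\llbracket T,T\rrbracket_\a=0$; hence $(C^\bullet_\a(M,A),\llbracket\,,\,\rrbracket_\a,d_T)$ is a differential graded Lie algebra. For the second assertion I would expand by bilinearity and use graded antisymmetry. Writing out
\begin{align*}
\llbracket T+T',\,T+T'\rrbracket_\a
&=\llbracket T,T\rrbracket_\a+\llbracket T,T'\rrbracket_\a+\llbracket T',T\rrbracket_\a+\llbracket T',T'\rrbracket_\a,
\end{align*}
and noting that $T,T'$ both lie in $C^1_\a(M,A)$ and carry the odd Maurer--Cartan degree, so that $\llbracket T',T\rrbracket_\a=\llbracket T,T'\rrbracket_\a$, while $\llbracket T,T\rrbracket_\a=0$ by hypothesis and $d_T(T')=\llbracket T,T'\rrbracket_\a$ by definition, one obtains
\begin{align*}
\llbracket T+T',\,T+T'\rrbracket_\a
&=2\,\llbracket T,T'\rrbracket_\a+\llbracket T',T'\rrbracket_\a\\
&=2\Big(d_T(T')+\frac{1}{2}\llbracket T',T'\rrbracket_\a\Big).
\end{align*}
Consequently $T+T'$ is again an $\O$-operator, i.e. $\llbracket T+T',T+T'\rrbracket_\a=0$, precisely when $d_T(T')+\frac{1}{2}\llbracket T',T'\rrbracket_\a=0$; this is exactly the Maurer--Cartan equation for $T'$ in the twisted dgLa, matching the equivalence predicted by Proposition~\ref{pp:mce}.

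Since everything reduces to Theorem~\ref{new-gla} and Proposition~\ref{pp:mce}, no genuine obstacle arises; the only step requiring care is the sign in the graded antisymmetry relation $\llbracket T',T\rrbracket_\a=-(-1)^{|T'|\,|T|}\llbracket T,T'\rrbracket_\a$, which determines whether the two cross terms reinforce (yielding the factor $2$) or cancel. I would check against the degree convention attached to the derived bracket $\llbracket\,,\,\rrbracket_\a$ that a Maurer--Cartan element such as $T\in C^1_\a(M,A)$ carries odd degree, so that $|T'|\,|T|$ is odd and the cross terms indeed add, producing the coefficient $\frac{1}{2}$ in the final Maurer--Cartan equation.
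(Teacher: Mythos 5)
Your proposal is correct and follows essentially the same route as the paper, which derives this theorem directly from the general Maurer--Cartan principle (Proposition~\ref{pp:mce}) applied to the graded Lie algebra of Theorem~\ref{new-gla}; the explicit bilinear expansion and the symmetry $\llbracket T',T\rrbracket_\a=\llbracket T,T'\rrbracket_\a$ (visible from the symmetric formula~\eqref{t-t}) are exactly the computations the paper leaves implicit.
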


It follows from the above theorem that if $T$ is an $\mathcal{O}$-operator, then $(C^\bullet _\a (M, A), d_T = \llbracket T, ~ \rrbracket_\a)$ is a cochain complex. Its cohomology is called \textbf{the cohomology of the $\mathcal{O}$-operator $T$}.

In the next paragraph, we interpret this cohomology as the Hochschild cohomology of a certain algebra with coefficients in a suitable bimodule. Then we will use the usual notation for Hochschild cohomology to denote the cohomology of an $\mathcal{O}$-operator.

\subsection{Cohomology of $\mathcal{O}$-operators as Hochschild cohomology}\label{sec3}

The aim of this section is to   show  that the cohomology of an $\mathcal{O}$-operator can also be described as the Hochschild cohomology of a certain Hom-associative algebra with coefficients in a  suitable bimodule. 

Let $T : M \rightarrow A$ be an $\mathcal{O}$-operator on $A$  with respect to the bimodule $(M,l,r,\phi)$. Then by Proposition \ref{o-dend} the vector space $M$ carries a Hom-associative algebra structure with the product
\begin{align}\label{m-ass}
u \star_T v = r(u, T(v )) + l(T(u) ,v), ~~~ \text{ for } u , v \in M.
\end{align}

According to \eqref{o-nij},  there is a Hom-associative algebra structure on
$A\oplus M \backsimeq M\oplus A$ defined   as
\begin{align*}
( x, u )  \cdot_{\overline{T}} (y, v)
=&(Tu ,0 )  \cdot_{(l,r)} (y, v)
 + (x, u)     \cdot_{(l,r)} (Tv, 0)
 - (T(l(x,v)+r(u,y) ,0)  \\
=&(Tu \cdot y , l(Tu,v) )
+ ( x \cdot Tv ,-r(u,Tv)  )
 - (T(l(x,v)+T(r(u,y)) , 0 )  \\
=&( r_T(u,y)+l_T(v,x) , u \star v ),
\end{align*}
where
\begin{equation}\label{rep on module V}
l_T(u,x) =Tu \cdot x-T(r(u,x)),\  r_T(x,u) =x \cdot Tu-T(l(x,u),\ \forall x \in A, u \in M.
\end{equation}
Therefore we have  the following result.

\begin{lemma}\label{new-rep-o}
Let $T: M \rightarrow A$ be an $\mathcal{O}$-operator on $A$ with respect to the bimodule $(M,l,r,\phi)$. Then   $(l_T, ~ r_T)$ defines an $M$-bimodule structure on $A$ with respect to $\a$.
\end{lemma}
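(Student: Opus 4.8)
The plan is to verify directly that the pair $(l_T, r_T)$ defined in \eqref{rep on module V} satisfies the five bimodule axioms from the definition of a bimodule over a Hom-associative algebra, now with the roles of $A$ and $M$ interchanged: here $M$ plays the role of the algebra (with product $\star_T$ and twisting map $\phi$) and $A$ is the module (with twisting map $\a$). Concretely, I would set out to check, for all $x \in A$ and $u, w \in M$, the compatibility relations
\begin{align*}
&\a\, l_T(u, x) = l_T(\phi(u), \a(x)), \qquad \a\, r_T(x, u) = r_T(\a(x), \phi(u)),\\
&l_T(u \star_T w, \a(x)) = l_T(\phi(u), l_T(w, x)),\\
&r_T(\a(x), u \star_T w) = r_T(r_T(x, u), \phi(w)),\\
&l_T(\phi(u), r_T(x, w)) = r_T(l_T(u, x), \phi(w)).
\end{align*}

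The most economical route, however, is conceptual rather than computational. The product $\cdot_{\overline{T}}$ on $A \oplus M \cong M \oplus A$ is precisely the deformed multiplication $\mu_{N_T}$ associated by Proposition \ref{o-nij} to the Nijenhuis operator $N_T$; since $T$ is an $\mathcal{O}$-operator, $N_T$ is Nijenhuis, so $\cdot_{\overline{T}}$ is a genuine Hom-associative multiplication on $A \oplus M$. I would first confirm that, under the identification $A \oplus M \cong M \oplus A$, this Hom-associative structure is exactly the semi-direct product of the Hom-associative algebra $(M, \star_T, \phi)$ (which is Hom-associative by Proposition \ref{o-dend}) with the vector space $A$, whose module actions are read off from the $A$-components of $(x,u)\cdot_{\overline{T}}(y,v) = (r_T(u,y)+l_T(v,x),\, u\star_T v)$. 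By the characterization recalled right after the definition of a bimodule — namely that $(A, l_T, r_T, \a)$ is a bimodule of $(M,\star_T,\phi)$ if and only if the split extension $M \oplus A$ carries the displayed semi-direct Hom-associative structure — the bimodule axioms for $(l_T, r_T)$ follow \emph{for free} from the already-established Hom-associativity of $\cdot_{\overline{T}}$. This reduces the lemma to matching the left and right actions in the semi-direct product formula against the expressions $l_T(u,x) = Tu\cdot x - T(r(u,x))$ and $r_T(x,u) = x\cdot Tu - T(l(x,u))$.

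If one prefers the direct verification, each axiom unfolds into terms of three shapes: products in $A$ of the form $(T\bullet)\cdot(\bullet)$, single applications of $T$ to bimodule actions, and nested applications $T(\,\cdot\,T\,\cdot\,)$. The systematic strategy is to expand both sides and cancel in stages, invoking (i) the $\mathcal{O}$-operator identity $T(u)T(v) = T(l(Tu)v + r(Tv)u)$ to rewrite products of two $T$'s, (ii) Hom-associativity of $\mu$ to re-associate the pure-$A$ terms, (iii) the original bimodule axioms for $(l,r,\phi)$ to handle the mixed terms, and (iv) the intertwining relation $T\phi = \a T$ together with multiplicativity of $\a$ to align the twisting maps.

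The main obstacle I anticipate is \textbf{bookkeeping the twisting maps} rather than any deep structural difficulty: one must be careful that $\a$ and $\phi$ are distributed correctly (e.g. that $\a\, T = T\phi$ is applied in the right places and that multiplicativity $\a(x\cdot y) = \a(x)\cdot\a(y)$ is available) so that every term lands in the correct $V_n^{\a}$-type space and the compatibility-with-$\a$ axioms close up exactly. For this reason I strongly favor the conceptual argument via the semi-direct product / Nijenhuis interpretation, which sidesteps the term-by-term twist-tracking entirely and makes the lemma an immediate consequence of Proposition \ref{o-nij} and the semi-direct product characterization of bimodules.
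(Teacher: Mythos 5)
Your proposal is correct and its preferred route is exactly the paper's: the paper gives no separate computational proof, but derives the lemma directly from the displayed computation of $\cdot_{\overline{T}}$ on $A\oplus M\cong M\oplus A$ as the Nijenhuis-deformed product associated with $N_T$ (Proposition \ref{o-nij}), read as the semi-direct product of $(M,\star_T,\phi)$ acting on $A$ via $(l_T,r_T)$, so the bimodule axioms follow from the semi-direct product characterization. Your fallback term-by-term verification is a valid alternative but is not what the paper does.
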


By Lemma \ref{new-rep-o} we obtain an $M$-bimodule structure on the vector space $A$. Therefore, we may consider the corresponding Hochschild cohomology of $M$ with coefficients in $A$. We define
\begin{align*}
C^n_\a(M, A) := \text{Hom} ( M^{\otimes n}, A), ~~ \text{ for } n \geq 0
\end{align*}
and the differential  by
\begin{align}\label{zero-diff}
d_H (a) (m) = l_T (m, a) - r_T (a, m) = T(m) \cdot a - T (ma) - a \cdot T(m) + T (am),  ~~~ \text{ for } a \in A = C^0_\a (M, A)
\end{align}
and
\begin{align*}
&(d_H  f) (u_1, \ldots, u_{n+1}) =~ T ( \phi^{n-1}(u_1)) \cdot f (u_2, \ldots, u_{n+1})  - T (r(\phi^{n-1}(u_1), f(u_2, \ldots, u_{n+1}))) \\
~&+ \sum_{i=1}^n (-1)^i ~ f (\phi(u_1), \ldots, \phi(u_{i-1}), r(u_i ,T( u_{i+1})) + l(T(u_i) , u_{i+1}), \ldots, \phi(u_{n+1})) \\
~&+ (-1)^{n+1}~  f (u_1, \ldots, u_n ) \cdot T(\phi^{n-1}(u_{n+1}))  - (-1)^{n+1} T ( l(f(u_1, \ldots, u_n), \phi^{n-1}(u_{n+1})) ).
\end{align*}
We denote the group of $n$-cocycles by $Z^n_\a (M, A)$ and the group of $n$-coboundaries by $B^n_\a (M, A)$.
The corresponding cohomology groups are defined by $$H^n_\a(M, A) = Z^n_\a (M, A) / B^n_\a (M, A),\quad n \geq 0.$$
In particular, we have 
\begin{align*}
H^0 (M, A) =~& \{ a \in A |~ d_H (a) = 0 \} \\
=~& \{ a \in A |~ a \cdot T(m) - T(m) \cdot a = T(l(a,m) -r(m,a) ), ~ \forall m \in M \}.
\end{align*}
From this definition, it is easy to see that if $a, b \in H^0 (M, A)$, then their commutator $[a, b] := a \cdot b - b \cdot a$ is also in $H^0 (M, A)$. This shows that $H^0 (M, A)$ has a Hom-Lie algebra structure induced from that of $A$.

Note that a linear map $f : M \rightarrow A$ (i.e. $f \in C^1 (M, A)$) is closed if it satisfies
\begin{align}\label{1-coc}
T(u) \cdot f (v) + f (u) \cdot T(v) - T (u f(v) + f(u) v ) - f (uT(v) + T(u)v ) = 0,
\end{align}
for all $u, v \in M.$

For an $\mathcal{O}$-operator $T$ on $A$ with respect to the bimodule $M$, we get two coboundary operators $d_T = \llbracket T, ~\rrbracket_\a$ and $d_H$ on the same graded vector space $C^\bullet _\a(M, A) = \oplus_{n \geq 0} C^n_\a (M, A)$. The following proposition relates the above two coboundary operators.

\begin{prop}
Let $T : M \rightarrow A$ be an $\mathcal{O}$-operator on $A$ with respect to the $A$-bimodule $M$. Then the two coboundary operators are related by
\begin{align*}
d_T f = (-1)^n~ d_H f , ~~~~ \text{ for } f \in C^n_\a (M, A).
\end{align*}
\end{prop}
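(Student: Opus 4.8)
The plan is to expand both coboundary operators by their explicit formulas and match them term by term. Recall from Theorem \ref{new-gla-2} that $d_T f = \llbracket T, f \rrbracket_\a$, so I would substitute $P = T$ (hence $m = 1$) and $Q = f$ (of arity $n$) into the derived-bracket formula \eqref{derived-bracket}. Since $m = 1$, every occurrence of $(-1)^{mn}$ collapses to $(-1)^n$ and the powers $\phi^{m}$, $\phi^{m-1}$ simplify; this is precisely where the global factor $(-1)^n$ in the statement originates. On the other side, I would first observe that $d_H f$ is nothing but the Hom-Hochschild differential \eqref{hom-ass-diff} of the Hom-associative algebra $(M, \star_T, \phi)$ of Proposition \ref{o-dend} with coefficients in the $M$-bimodule $(A, l_T, r_T, \a)$ of Lemma \ref{new-rep-o}: its extreme terms are $l_T(\phi^{n-1}(u_1), f(u_2,\dots,u_{n+1}))$ and $(-1)^{n+1} r_T(f(u_1,\dots,u_n), \phi^{n-1}(u_{n+1}))$, while its inner terms form $\sum_{i=1}^{n} (-1)^i f(\dots, u_i \star_T u_{i+1}, \dots)$.

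The key step is then to unfold $l_T$, $r_T$ and $\star_T$ by their definitions \eqref{rep on module V} and \eqref{m-ass} and to recognise that the resulting pieces are exactly the terms produced by the derived bracket, grouped as follows. First, the two $\mu$-terms of \eqref{derived-bracket} (its last block, where $T$ and $f$ are multiplied in $A$) match the $Tu\cdot x$ and $x\cdot Tu$ parts of $l_T$ and $r_T$. Second, the first two sums of \eqref{derived-bracket} at $m=1$, in which $T$ is applied to a value of $f$ through $l$ and $r$, match the $-T(r(\cdot,\cdot))$ and $-T(l(\cdot,\cdot))$ parts of $l_T$ and $r_T$. Third, the remaining sub-sums of \eqref{derived-bracket}, in which $T$ is inserted into an internal slot of $f$ via $l$ and $r$, combine in pairs into $r(u_i, T(u_{i+1})) + l(T(u_i), u_{i+1}) = u_i \star_T u_{i+1}$, reproducing the inner sum of $d_H f$ with the correct signs $(-1)^i$.

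The main obstacle is the bookkeeping of the twisting maps and of the signs, rather than any conceptual difficulty: one must check that the powers of $\phi$ and $\a$ decorating the frozen arguments on the two sides agree. For this I would repeatedly use the cochain condition $\a \circ f = f \circ \phi^{\otimes n}$ valid for $f \in C^n_\a(M,A)$, the $\O$-operator relation $T\phi = \a T$, the multiplicativity of $\a$ and $\phi$, and the bimodule axioms of $(M,l,r,\phi)$ (together with the standing invertibility of $\phi$ and $\a$), which let one move a power of $\phi$ past $l$, $r$ and $T$ and thereby reconcile, for instance, the argument $u_1$ coming from \eqref{derived-bracket} at $m=1$ with the argument $\phi^{n-1}(u_1)$ appearing in $d_H f$. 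Once all $\phi$- and $\a$-powers are aligned, a final comparison of the collected signs shows that the full expansion of $\llbracket T, f\rrbracket_\a$ equals $(-1)^n d_H f$, which is the assertion. I expect this twist-and-sign reconciliation to be the only delicate part; the structural correspondence between the two families of terms is immediate.
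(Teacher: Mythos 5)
Your proposal is correct and follows essentially the same route as the paper: the paper's proof likewise substitutes $P=T$ (so $m=1$) and $Q=f$ into the derived-bracket formula \eqref{derived-bracket}, letting $(-1)^{mn}$ collapse to the global sign $(-1)^n$, and then matches the resulting terms with the expansion of $d_H f$ through $l_T$, $r_T$ and $\star_T$. The only point the paper adds that you leave implicit is the degenerate case $f=a\in C^0_\a(M,A)$, which it settles by comparing \eqref{derived-bracket-0} with \eqref{zero-diff}.
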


\begin{proof}
For any $f \in C^n_\a (M, A) = \text{Hom}(M^{\otimes n}, A )$ and $u_1, \ldots, u_{n+1} \in M$, we have from (\ref{derived-bracket}) that
\begin{align*}
(d_T f) (u_1, \ldots, u_{n+1}) =~& \llbracket T, f \rrbracket _\a(u_1, \ldots, u_{n+1})   \\
=~& T (l(f(u_1, \ldots, u_n) \phi^{n-1}(u_{n+1}))) - (-1)^n ~ T (r(\phi^{n-1}(u_1), f(u_2, \ldots, u_{n+1})) \\
-~& (-1)^n \bigg\{ \sum_{i=1}^n (-1)^{i-1}~ f (\phi(u_1), \ldots, \phi(u_{i-1}), T(u_i) u_{i+1} , u_{i+2}, \ldots, \phi(u_{n+1})) \\
-~& \sum_{i=1}^n (-1)^i f (\phi(u_1), \ldots, \phi(u_{i-1}), u_i T(u_{i+1}), u_{i+2}, \ldots, \phi(u_{n+1}) ) \bigg\} \\
+~& (-1)^n ~T(\phi^{n-1}(u_1)) \cdot f (u_2, \ldots, u_{n+1}) - f (u_1, \ldots, u_n) \cdot T(\phi^{n-1}(u_{n+1}))\\
=~& (-1)^n ~ (d_H f) (u_1, \ldots, u_{n+1}).
\end{align*}
The same holds true when $f = a \in A$. Compare (\ref{derived-bracket-0}) with (\ref{zero-diff}). Hence the proof.
\end{proof}

This shows that the cohomology of the either complex $(C^\bullet _\a(M, A), d_T)$ or $(C^\bullet (M, A), d_H)$ are isomorphic. Thus, we may use the same notation $H^\bullet _\a (M, A)$ to denote the cohomology of an $\mathcal{O}$-operator.

\section{Deformations of $\mathcal{O}$-operators}\label{sec4}

In this section, we study the Infinitesimal and formal deformation theory  of $\mathcal{O}$-operators following the classical approaches initiated by Gerstenhaber \cite{gers,gers2}.
\subsection{Infinitesimal deformations}
Let $(A, \cdot_A, \a_A )$ be a Hom-associative algebra and $(V,l_A,r_A,\phi)$ an $A$-bimodule, and $(B, \cdot_B,\a_B)$ be a Hom-associative algebra with $(W,l_B,l_B,\psi)$ a $B$-bimodule. Suppose $T : V \rightarrow A$ is an $\mathcal{O}$-operator on $A$ with respect to the $A$-bimodule $(V,l_A,r_A,\phi)$ and $T' : W\rightarrow B$ is an $\mathcal{O}$-operator on $B$ with respect to the bimodule $(B, \cdot_B,\a_B)$.

\begin{defn}\label{o-op-morphism}
A  morphism of $\mathcal{O}$-operators from $T$ to $T'$ consists of a pair $(f, g)$ of an algebra morphism $f : A \rightarrow B$ and a linear map $g : V \rightarrow W$ satisfying
\begin{align}
\psi g =~& g\phi, \label{eq0}\\
T'  g =~& f  T, \label{eq1}\\
l_B(f (x) ,g (u )) =~& g ( l_A(x,u) ), \label{eq2}\\
r_B(g (u ),f (x) ) =~& g ( r_A(u,x ),  \label{eq3}
\end{align}
for all  $x \in A$ and $ u \in V$.

\end{defn}
It is called an isomorphism if both $f$ and $g$ are  bijective. According to Lemma \ref{o-dend}, the vector spaces $V$ and $W$ can be endowed with a Hom-dendriform  algebra structures and $g$ becomes then a morphism between Hom-dendriform algebras $V$ and $W$ and also a morphism between the sub-adjacent Hom-associative algebras .

The proof of the following result is straightforward so we omit the details.
\begin{prop}
A pair of linear maps $(f : A \rightarrow B, ~ g : V \rightarrow W)$ is a morphism of $\mathcal{O}$-operators from $T$ to $T'$ if and only if
\begin{align*}
\mathrm{Gr} ((f, g)) := \big\{ \big( (a, u), (f(a), g (u)) \big) | ~ a \in A, u \in V \big\} \subset (A \oplus V) \oplus (B \oplus W)
\end{align*}
is a Hom-subalgebra, where $A \oplus V$ and $B \oplus W$ are equipped with semi-direct product Hom-algebra structures.
\end{prop}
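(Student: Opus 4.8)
The plan is to prove both directions of the equivalence by directly unwinding the definition of the semi-direct product Hom-associative structures on $A\oplus V$ and $B\oplus W$, and then comparing the closure condition for $\mathrm{Gr}((f,g))$ against the four defining axioms \eqref{eq0}--\eqref{eq3} of a morphism of $\mathcal{O}$-operators. First I would recall that the product on the semi-direct product is $(x,u)\cdot_{l,r}(y,v)=(x\cdot y,\,l(x,u)+r(v,y))$ with twisting map $(\a,\phi)$, so that the product and twisting map on the big space $(A\oplus V)\oplus(B\oplus W)$ are taken componentwise. A generic element of $\mathrm{Gr}((f,g))$ has the form $\big((a,u),(f(a),g(u))\big)$, so the set is a Hom-subalgebra precisely when it is closed under the twisting map $(\a_A,\phi)\oplus(\a_B,\psi)$ and under the componentwise product.

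Next I would compute each closure requirement separately. For closure under the twisting map, applying $(\a_A,\phi)\oplus(\a_B,\psi)$ to $\big((a,u),(f(a),g(u))\big)$ gives $\big((\a_A(a),\phi(u)),(\a_B f(a),\psi g(u))\big)$; for this to lie in $\mathrm{Gr}((f,g))$ we need its second component to equal $\big(f(\a_A(a)),g(\phi(u))\big)$, which is exactly the condition $f\a_A=\a_B f$ (the statement that $f$ is an algebra morphism, hence commutes with the twisting maps) together with $\psi g=g\phi$, i.e. \eqref{eq0}. For closure under the product, I would multiply two generic elements $\big((a,u),(f(a),g(u))\big)$ and $\big((b,v),(f(b),g(v))\big)$. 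The first (the $A\oplus V$) component is $\big(a\cdot_A b,\,l_A(a,u)+r_A(v,b)\big)$, and the second (the $B\oplus W$) component is $\big(f(a)\cdot_B f(b),\,l_B(f(a),g(u))+r_B(g(v),f(b))\big)$. Membership in the graph demands that the second component equal the image of the first component under $(f,g)$, namely $\big(f(a\cdot_A b),\,g(l_A(a,u)+r_A(v,b))\big)$. Matching the $B$-entries gives $f(a\cdot_A b)=f(a)\cdot_B f(b)$ — multiplicativity of $f$, part of it being an algebra morphism — while matching the $W$-entries and separating the $l$ and $r$ contributions (which live in formally independent summands, or can be isolated by specializing $u=0$ or $v=0$) yields $l_B(f(a),g(u))=g(l_A(a,u))$ and $r_B(g(v),f(b))=g(r_A(v,b))$, i.e. \eqref{eq2} and \eqref{eq3}.

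The only remaining condition, \eqref{eq1}, is not a closure requirement but is built into the shape of the graph: the definition of $\mathrm{Gr}((f,g))$ as stated uses pairs $\big((a,u),(f(a),g(u))\big)$, and in this formulation the relation $T'g=fT$ is what guarantees that this subset is compatible with the $\mathcal{O}$-operator data (equivalently, it is the condition identifying the graph of the pair with a genuine morphism of the operators $T,T'$ rather than merely a Hom-subalgebra of the ambient semi-direct products). I would verify that \eqref{eq1} is equivalent to the statement that $(f,g)$ maps $\mathrm{Gr}(T)=\{(Tu,u)\}$ into $\mathrm{Gr}(T')=\{(T'w,w)\}$, using the earlier Proposition characterizing $\mathrm{Gr}(T)$ as a subalgebra; since $(f,g)\big((Tu,u)\big)=(fTu,gu)$ lands in $\mathrm{Gr}(T')$ iff $fTu=T'(gu)$, this is exactly \eqref{eq1}. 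Assembling these, the subset $\mathrm{Gr}((f,g))$ is a Hom-subalgebra if and only if all four conditions \eqref{eq0}--\eqref{eq3} hold, which is the claim.

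The main obstacle I anticipate is purely bookkeeping rather than conceptual: cleanly separating the two bimodule conditions \eqref{eq2} and \eqref{eq3} from the single matched $W$-equation $l_B(f(a),g(u))+r_B(g(v),f(b))=g(l_A(a,u)+r_A(v,b))$, and confirming that the $l$-terms and $r$-terms can indeed be decoupled — which follows by polarization (setting $v=0$ to isolate $l$, then $u=0$ to isolate $r$) so that no cross-terms are lost. I would therefore take care to state explicitly that bilinearity lets us specialize inputs, since this is precisely why one equation in $W$ yields two independent axioms.
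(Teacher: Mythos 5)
The paper gives no proof of this proposition (it is declared ``straightforward'' and omitted), so your proposal can only be judged on its own terms. Your computation of the two closure conditions is correct and is surely what the authors intended: closure of $\mathrm{Gr}((f,g))$ under the componentwise twisting map is equivalent to $f\circ\a_A=\a_B\circ f$ together with \eqref{eq0}, and closure under the componentwise product is equivalent to multiplicativity of $f$ together with the single $W$-valued identity $l_B(f(a),g(u))+r_B(g(v),f(b))=g\big(l_A(a,u)+r_A(v,b)\big)$, which your specialization $v=0$, then $u=0$, correctly splits into \eqref{eq2} and \eqref{eq3}. (Your parenthetical that the $l$- and $r$-terms ``live in formally independent summands'' is not right --- both land in $W$ --- but the polarization argument you fall back on is valid.)

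The genuine gap is your treatment of \eqref{eq1}. Neither the subset $\mathrm{Gr}((f,g))$ nor the Hom-associative structure on $(A\oplus V)\oplus(B\oplus W)$ refers to $T$ or $T'$ anywhere, so the condition ``$\mathrm{Gr}((f,g))$ is a Hom-subalgebra'' is logically independent of $T'g=fT$; it cannot be ``built into the shape of the graph'' as you assert. What your argument actually establishes is that the subalgebra condition is equivalent to \eqref{eq0}, \eqref{eq2}, \eqref{eq3} together with $f$ being an algebra morphism, plus --- separately and correctly --- that \eqref{eq1} is equivalent to $(f,g)$ carrying $\mathrm{Gr}(T)$ into $\mathrm{Gr}(T')$. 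Assembling these does not yield the stated biconditional: a pair $(f,g)$ satisfying everything except $T'g=fT$ still has a Hom-subalgebra for its graph, yet is not a morphism of $\mathcal{O}$-operators. To close the argument you must either prove a corrected statement (appending your supplementary condition $(f,g)(\mathrm{Gr}(T))\subseteq\mathrm{Gr}(T')$ to the right-hand side, or restricting the graph construction to structures that see $T$ and $T'$), and say explicitly that this is what you are doing, or else exhibit why the literal statement holds --- which it does not.
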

In the rest of the paper, we will be most interested in morphisms between $\mathcal{O}$-operators on the same Hom-algebra with respect to the same bimodule.

Let $T: M \rightarrow A$ be an $\mathcal{O}$-operator on a Hom-associative algebra $A$ with respect to the $A$-bimodule $M$.
\begin{defn}
An infinitesimal  (one-parameter)  deformation of $T$ consists of a sum $T_t = T + t  \mathcal{T}$, for some $\mathcal{T} \in \text{Hom} (M,A)$, such that $T_t$ is an $\mathcal{O}$-operator on $A$ with respect to the bimodule $M$, for all $t$. In such a case, we say that $\mathcal{T}$ generates a infinitesimal or a linear deformation of $T$.
\end{defn}
It is easy to check that  $T_t=T+t \mathcal{T}$ is an infinitesimal deformation of an $\mathcal{O}$-operator $T$ if and only if for any $u,v \in M$,
\begin{align*}
T_t \phi =&\a T_t, ~~~~ \text{ for all } t , \\
T_t (u) \cdot T_t (v) =& T_t ( l(T_t (u)) v+ r(T_t(v))u ), ~~~~ \text{ for all } t \text{ and } u, v \in M.
\end{align*}
By equating coefficients of $t$ and $t^2$ from both side, we obtain
\begin{align}
\mathcal{T} \phi =~&\a \mathcal{T},  \label{lin def 1} ,\\
T(u) \cdot\mathcal{T} (v) +\mathcal{T}(u) \cdot T(v) =~& T (r(\mathcal{T}(v))u +l(\mathcal{T}(u))v) +\mathcal{T} (r(T(v))u + l(T(u)) v), \label{lin def 2}\\
\mathcal{T}(u) \cdot\mathcal{T}(v) =~&\mathcal{T} ( r(\mathcal{T}(v))u +l(\mathcal{T}(u)) v ). \label{lin def 3}
\end{align}
 The identity \eqref{lin def 2} means that  $d_T (\mathcal{T}) = 0$, that is $\mathcal{T}$ is a $1$-cocycle with respect to  the cohomology of the $\mathcal{O}$-operator $T$. Eqs. \eqref{lin def 1} and \eqref{lin def 3} mean that $\mathcal{T}$ is an $\mathcal{O}$-operator on $A$ associated  to the bimodule $M$.

Given a  Hom-dendriform algebra $(D,\prec,\succ,\phi)$.  Let $\omega_\prec, \omega_\succ: D\otimes D \to D$ be two linear maps.  If for any $t \in \mathbb{K}$, the multiplications $\prec_t$ and $\succ_t$ defined by
\begin{align*}
u \prec_t v=u \prec v+t  \omega_\prec(u,v),\quad
u \succ_t v=u \succ v+t  \omega_\succ(u,v),\ \forall u,v \in D
\end{align*}
give a Hom-dendriform algebra structure on $D$, we say that the pair $(\omega_\prec, \omega_\succ)$ generates a (one parameter) infinitesimal deformation of the Hom-dendriform algebra $(D,\prec,\succ,\phi)$.

In the following, we show that an infinitesimal deformation of an $\mathcal{O}$-operator induces an infinitesimal deformation of the corresponding Hom-dendriform structure on the module.

\begin{prop}
If $\mathcal{T}$ generates an infinitesimal deformation of an $\mathcal{O}$-operator $T$ on a Hom-associative algebra $A$ with respect to the bimodule $M$, then the multiplications
\begin{align*}
u \prec_t  v = u T(v) + t~ u\mathcal{T}(v), ~ u \succ_t v = T(u) v + t~\mathcal{T}(u) v
\end{align*}
define an infinitesimal deformation of the corresponding Hom-dendriform structure on $M$.
\end{prop}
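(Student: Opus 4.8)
The plan is to verify directly that the deformed operations $\prec_t$ and $\succ_t$ satisfy the three Hom-dendriform axioms \eqref{homdend-def-1}--\eqref{homdend-def-3}, and to organize the computation so that the coefficients of each power of $t$ are handled separately. Since $\prec_t = \prec + t\,\omega_\prec$ and $\succ_t = \succ + t\,\omega_\succ$ with $\omega_\prec(u,v) = u\,\mathcal{T}(v) = r(\mathcal{T}(v))u$ and $\omega_\succ(u,v) = \mathcal{T}(u)\,v = l(\mathcal{T}(u))v$, expanding each dendriform axiom in powers of $t$ produces three requirements: the coefficient of $t^0$ holds because $(\prec,\succ)$ is itself a Hom-dendriform structure on $M$ (this is exactly Proposition \ref{o-dend}, using the product with $T$); the coefficient of $t^2$ holds because $(\omega_\prec,\omega_\succ)$ built from $\mathcal{T}$ gives a Hom-dendriform structure, which follows from \eqref{lin def 3} together with \eqref{lin def 1} telling us $\mathcal{T}$ is itself an $\mathcal{O}$-operator; and the coefficient of $t^1$ is the genuinely new ``mixed'' condition that must be extracted from the cocycle equation \eqref{lin def 2}.

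First I would record the three pieces of data I am allowed to use: equation \eqref{lin def 1} ($\mathcal{T}\phi = \a\mathcal{T}$), which guarantees $\omega_\prec,\omega_\succ$ are compatible with the twisting map $\phi$; equation \eqref{lin def 3}, which by Proposition \ref{o-dend} applied to the $\mathcal{O}$-operator $\mathcal{T}$ yields the $t^2$-level dendriform identities; and the cocycle condition \eqref{lin def 2}, which is $d_T(\mathcal{T}) = 0$. I would then write out, say, the first axiom \eqref{homdend-def-1} for $\prec_t,\succ_t$, namely $(x\prec_t y)\prec_t \a(z) = \a(x)\prec_t(y\prec_t z + y\succ_t z)$, collect the linear-in-$t$ terms on both sides, and observe that the resulting equation is precisely the statement that the pair $(\omega_\prec,\omega_\succ)$ is a $1$-cocycle in the deformation complex of the Hom-dendriform algebra $(M,\prec,\succ,\phi)$. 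The three such linear-order identities (one per axiom) should assemble into the single cocycle condition \eqref{lin def 2} once one unwinds the definitions $u\prec v = r(T(v))u$, $u\succ v = l(T(u))v$ and uses the bimodule compatibility relations from Definition 2.2 for $(M,l,r,\phi)$.

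The main obstacle I anticipate is the bookkeeping at linear order: showing that the three separate mixed identities coming from the three dendriform axioms are all consequences of the one cocycle equation \eqref{lin def 2}. This requires repeatedly applying the bimodule axioms $l(xy,\phi(v)) = l(\a(x),l(y,v))$, $r(\phi(v),xy) = r(r(v,x),\a(y))$, and $l(\a(x),r(v,y)) = r(l(x,v),\a(y))$ to move the twisting maps and the operators $T,\mathcal{T}$ into the correct positions, together with the defining $\mathcal{O}$-operator relation for $T$ to rewrite products $T(u)\cdot T(v)$. Because each dendriform axiom only captures one of the three bimodule actions, I would treat them one at a time and check that each reduces, after substituting the explicit formulas, to the corresponding component of $d_T(\mathcal{T}) = 0$; the compatibility \eqref{lin def 1} is what permits cancelling the $\phi^{\,k}$-adjusted arguments. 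Since the $t^0$ and $t^2$ coefficients are handled by Proposition \ref{o-dend} directly, the whole proof amounts to this single linear-order verification, which is routine but computation-heavy, so I would present it as equating coefficients of $t$ and citing \eqref{lin def 2}.
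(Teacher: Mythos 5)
The paper states this proposition without proof; the intended argument is the immediate one: by definition an infinitesimal deformation means $T_t = T + t\mathcal{T}$ is an $\mathcal{O}$-operator for \emph{every} $t$, so Proposition \ref{o-dend} applied to $T_t$ directly yields a Hom-dendriform structure $u\prec_{T_t}v = r(T_t(v))u = uT(v)+t\,u\mathcal{T}(v)$, $u\succ_{T_t}v = l(T_t(u))v = T(u)v+t\,\mathcal{T}(u)v$ on $M$ for each $t$, and these are exactly $\prec_t,\succ_t$ --- which is precisely what the definition of an infinitesimal deformation of $(M,\prec,\succ,\phi)$ requires. Your route is correct but genuinely different and considerably longer: you re-expand the three dendriform axioms in powers of $t$ and verify the $t^0$, $t^1$, $t^2$ coefficients separately, invoking Proposition \ref{o-dend} for $T$ at order $0$, the cocycle condition \eqref{lin def 2} at order $1$ (together with the bimodule axioms; note that for the middle axiom $(x\succ y)\prec\phi(z)=\phi(x)\succ(y\prec z)$ the linear term is in fact automatic from the bimodule relations alone, with no input from \eqref{lin def 2}), and Proposition \ref{o-dend} for the $\mathcal{O}$-operator $\mathcal{T}$ (via \eqref{lin def 1} and \eqref{lin def 3}) at order $2$. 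What your decomposition buys is conceptual transparency: it exhibits the linear-order obstruction explicitly as the statement that $(\omega_\prec,\omega_\succ)$ is a cocycle for the Hom-dendriform deformation complex, which connects this proposition to the cohomological picture. What it costs is that the pointwise-in-$t$ argument makes the entire coefficient bookkeeping unnecessary; since \eqref{lin def 1}--\eqref{lin def 3} are themselves equivalent to ``$T_t$ is an $\mathcal{O}$-operator for all $t$'', your three verifications collectively re-derive a special case of Proposition \ref{o-dend} that is already available.
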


\begin{corollary}
If $\mathcal{T}$ generates an infinitesimal deformation of an $\mathcal{O}$-operator $T$ on a Hom-associative algebra $A$ with respect to the bimodule $M$, then the multiplication
\begin{align*}
u \star_t  v =  u \star v +t  ( u\mathcal{T}(v) +\mathcal{T}(u) v)
\end{align*}
defines an infinitesimal deformation of the corresponding Hom-associative structure on $M$.
\end{corollary}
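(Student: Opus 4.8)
The plan is to obtain this as an immediate consequence of the preceding Proposition, by passing to the sub-adjacent Hom-associative algebra of a Hom-dendriform algebra. Recall from the discussion following the definition of a Hom-dendriform algebra that for any Hom-dendriform algebra the product $\prec+\succ$ is Hom-associative (this is the convention $\star_T=\prec+\succ$ adopted in Proposition \ref{o-dend}). The essential observation is that this ``sub-adjacent'' assignment $(\prec,\succ)\mapsto\prec+\succ$ is linear in the two dendriform operations, so it automatically transports any one-parameter deformation of the dendriform structure to a one-parameter deformation of the associated Hom-associative structure.

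First I would invoke the preceding Proposition: since $\mathcal{T}$ generates an infinitesimal deformation of $T$, the operations
\[
u\prec_t v=uT(v)+t\,u\mathcal{T}(v),\qquad u\succ_t v=T(u)v+t\,\mathcal{T}(u)v
\]
define, for every $t\in\mathbb{K}$, a Hom-dendriform algebra $(M,\prec_t,\succ_t,\phi)$, i.e.\ they satisfy the axioms \eqref{homdend-def-1}--\eqref{homdend-def-3}. Forming the sub-adjacent product $\star_t:=\prec_t+\succ_t$ and computing directly gives
\[
u\star_t v=\big(uT(v)+T(u)v\big)+t\big(u\mathcal{T}(v)+\mathcal{T}(u)v\big)=u\star v+t\big(u\mathcal{T}(v)+\mathcal{T}(u)v\big),
\]
which is precisely the product in the statement, and whose $t=0$ term recovers the original Hom-associative product $\star=\star_T$ on $M$ from Proposition \ref{o-dend}.

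Because $(M,\prec_t,\succ_t,\phi)$ is a Hom-dendriform algebra for each $t$, its sub-adjacent product $\star_t$ is Hom-associative for each $t$; and since $\star_t$ is visibly linear in $t$, this is by definition an infinitesimal deformation of $(M,\star,\phi)$. There is essentially no obstacle here: the only content is the linearity of the passage $(\prec,\succ)\mapsto\prec+\succ$ and the bookkeeping identifying $\prec_t+\succ_t$ with the stated $\star_t$, so that Hom-associativity is inherited for free from the preceding Proposition and no fresh verification of the associativity axiom is needed.
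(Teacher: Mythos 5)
Your proposal is correct and matches the paper's (implicit) argument: the paper states this corollary without proof precisely because it follows immediately from the preceding proposition together with the fact that the sub-adjacent product $\prec+\succ$ of any Hom-dendriform algebra is Hom-associative, which is exactly the reasoning you spell out. The only minor point worth noting is that the paper's earlier display $x\star y=x\prec y+y\succ x$ is a typo for $x\star y=x\prec y+x\succ y$ (as confirmed by Proposition \ref{o-dend} and equation \eqref{m-ass}), and your computation correctly uses the latter convention.
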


\begin{defn}
Let $T$ be an $\mathcal{O}$-operator on a Hom-associative algebra $A$ with respect to a bimodule $(M,l,r,\phi)$.
Two infinitesimal deformations $T^1_t = T + t\mathcal{T}_1$ and $T^2_t = T + t\mathcal{T}_2$ of $T$ are  equivalent if there exists an $x \in A$ such that  $\a(x)=x$ and the pair $\big( \text{id}_A + t (L_x - R_x), \text{id}_M + t (l_x - r_x) \big)$ is a morphism of $\mathcal{O}$-operators from $T^1_t$ to $T^2_t$.
\end{defn}
Let us recall  from Definition \ref{o-op-morphism} that the pair  $\big( \text{id}_A + t (L_x - R_x), \text{id}_M + t (l_x - r_x) \big)$ is a morphism of $\mathcal{O}$-operators from $T^1_t$ to $T^2_t$ if  the following conditions are satisfied
\begin{itemize}
  \item [(i)]  The map $\text{id}_A + t (L_x - R_x)$ is a Hom-associative algebra homomorphism.
  \item [(ii)]  $\phi \circ ( \text{id}_M + t (l_x - r_x) )= (\text{id}_M + t (l_x - r_x))\circ  \phi$.
  \item [(iii)] $(T + t\mathcal{T}_2)\circ (\text{id}_M + t (l_x - r_x))=(\text{id}_A + t (L_x - R_x)) \circ  (T + t\mathcal{T}_1)$.
  \item [(iv)]  $l(y+t[x,y],u+t(l(x,u)-r(u,x)))=l(y,u)+t(l(x,l(y,u))-r(l(y,u),x))$ .
  \item [(v)] $r(u+t(l(x,u)-r(u,x)),y+t[x,y])=r(u,y)+tl(x,r(u,y))-tr(r(u,y),x)$.
\end{itemize}
From (i), we obtain, for all $y,z \in A$
\begin{align*}
(\text{id}_A + t (L_x - R_x)) (y \cdot z) = (\text{id}_A + t (L_x - R_x))(y) \cdot (\text{id}_A + t (L_x - R_x))(z),
\end{align*}
or equivalently
\begin{align}
(x\cdot y-y\cdot x) \cdot (x\cdot z-z \cdot x)= 0, ~~\forall y,z \in A. \label{comm-comm-zero}
\end{align}
Since $\a(x)=x$, then condition (ii) holds.  Moreover,  from (iii) we get, for any $u \in M$,
\begin{align*}
 (T + t\mathcal{T}_2 ) \circ (\text{id}_M + t (l_x - r_x)) (u) = (\text{id}_A + t (L_x - R_x)) \circ (T + t\mathcal{T}_1 ) (u),
 \end{align*}
On comparing the coefficients of $t$ and $t^2$, respectively, from both sides of the above identity, we obtain
\begin{align}
\mathcal{T}_1 (u) -\mathcal{T}_2 (u) =~& T ( l(x,u) - r(u,x) ) - (x \cdot T(u) - T(u) \cdot x) \nonumber\\
=~& l_T (u, x) - r_T (x, u), \label{1-cocycle-linear}\\
x \cdot\mathcal{T}_1 (u) -\mathcal{T}_1 (u) \cdot x =~&\mathcal{T}_2 ( l(x,u) -r(u,x) ).
\label{T1-T2}
\end{align}
From (iv),  on comparing just the coefficients of $t^2$ (since the coefficients of $t$ vanish) from both sides of the identity, we obtain  $$l(x\cdot y-y \cdot x,l(x,u) -r(u,x)) = 0 $$ or equivalently,
\begin{align}
l_{x\cdot y-y \cdot x} \circ l_x = l_{x\cdot y-y \cdot x} \circ r_x, ~~~~ \text{ for } y \in A.
\label{ll-lr}
\end{align}
Similarly,  (v)  gives rise to
\begin{align}
r_{x\cdot y-y \cdot x} \circ l_x = r_{x \cdot y-y \cdot x} \circ r_x, ~~~~ \text{ for } y \in A.
\label{rl-rr}
\end{align}

\begin{thm}
Let  $T^1_t = T + t\mathcal{T}_1$ and $T^2_t = T + t\mathcal{T}_2$ be two equivalent infinitesimal  deformations of an $\mathcal{O}$-operator $T$. Then $\mathcal{T}_1$ and $\mathcal{T}_2$ defines the same cohomology class in $H^1_\a(M, A).$
\end{thm}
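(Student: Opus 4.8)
The plan is to identify the difference $\mathcal{T}_1 - \mathcal{T}_2$ with a $1$-coboundary and then invoke the fact that both $\mathcal{T}_i$ are $1$-cocycles. First I would note that since each $\mathcal{T}_i$ generates an infinitesimal deformation of $T$, equation \eqref{lin def 2} gives $d_T \mathcal{T}_i = 0$ for $i = 1, 2$; hence both are $1$-cocycles and their classes in $H^1_\a(M,A)$ are well defined, so the statement is meaningful.

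The key step is already contained in equation \eqref{1-cocycle-linear}, obtained from condition (iii) of the morphism of $\mathcal{O}$-operators by comparing the coefficients of $t$: for all $u \in M$,
\[
\mathcal{T}_1(u) - \mathcal{T}_2(u) = l_T(u,x) - r_T(x,u).
\]
I would then read off from the coboundary formula \eqref{zero-diff} on $0$-cochains, namely $d_H(a)(m) = l_T(m,a) - r_T(a,m)$, that upon setting $a = x$ and $m = u$ the right-hand side above is precisely $d_H(x)(u)$. Thus $\mathcal{T}_1 - \mathcal{T}_2 = d_H(x)$.

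The only point requiring care, and the sole place where the hypothesis $\a(x) = x$ built into the definition of equivalence is used, is verifying that $x$ is a legitimate $0$-cochain, i.e. $x \in C^0_\a(M,A) = \{a \in A : \a(a) = a\}$, so that $d_H(x)$ genuinely lies in $B^1_\a(M,A)$. A short check, using multiplicativity of $\a$, the intertwining $\a T = T\phi$, and the bimodule compatibilities, shows that $d_H(x)$ satisfies the cochain condition $\a \circ d_H(x) = d_H(x) \circ \phi$ exactly when $\a(x) = x$. Granting this, $\mathcal{T}_1 - \mathcal{T}_2$ is a $1$-coboundary, so $\mathcal{T}_1$ and $\mathcal{T}_2$ represent the same class in $H^1_\a(M,A)$. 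I do not anticipate any serious obstacle here: all the computational content was front-loaded into deriving \eqref{1-cocycle-linear}, and what remains is simply recognizing that identity as the coboundary formula together with the observation that $\a(x)=x$ places $x$ in degree zero of the complex.
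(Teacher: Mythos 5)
Your proposal is correct and follows exactly the paper's argument: the paper's proof consists of the single remark that the claim follows from equation \eqref{1-cocycle-linear}, i.e.\ $\mathcal{T}_1 - \mathcal{T}_2 = l_T(\cdot,x) - r_T(x,\cdot) = d_H(x)$, which is precisely your key step. The extra checks you supply (that each $\mathcal{T}_i$ is a $1$-cocycle via \eqref{lin def 2}, and that $\a(x)=x$ makes $x$ a valid $0$-cochain) are sound and merely make explicit what the paper leaves implicit.
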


\begin{proof}
The proof follows from equation  \eqref{1-cocycle-linear}.
\end{proof}

\begin{defn}
Let $T: M \to A$ be an $\O$-operator on a Hom-associative algebra $A$ with respect to a representation $(M,l,r,\phi)$.
An infinitesimal  deformation $T_t = T + t\mathcal{T}$  is said to be trivial if it is equivalent to the deformation $T_0= T.$
\end{defn}
According to the above computation,  $T_t = T + t\mathcal{T}$ is trivial if there exists an element $x \in A$ such that $\a(x)=x$ and satisfying
\begin{align}
& (x\cdot y-y\cdot x) \cdot (x\cdot z-z \cdot x)= 0, \label{Nij element 1}\\
& x \cdot ( l_T (u, x) - r_T (x, u))-( l_T (u, x) - r_T (x, u))\cdot x=0, \label{Nij element 2} \\
& l_{x\cdot y-y \cdot x} \circ l_x = l_{x\cdot y-y \cdot x} \circ r_x,\label{Nij element 3} \\
&r_{x\cdot y-y \cdot x} \circ l_x = r_{x \cdot y-y \cdot x} \circ r_x, \label{Nij element 4}
\end{align}
for any $y,z  \in A$ and $u \in M$.  Then we introduce the notion of a Nijenhuis element associated to an $\O$-operator on a Hom-associative algebra.

\begin{defn}
Let $T$ be an $\O$-operator on a Hom-associative algebra $(A,\a)$ with respect to a representation $(M,l,r,\phi)$. An element $x \in A$ is called a Nijenhuis element associated to $T$ if $x$ satisfies $\a(x)=x$ and the conditions  \eqref{Nij element 1}--\eqref{Nij element 4}  hold.
\end{defn}
Let us denote the set of Nijenhuis elements associated to the $\O$-operator $T$ by$ \textrm{Nij}(T)$.

In \cite{O-op on hom-lie}, the authors introduce Nijenhuis elements associated to an $\O$-
operator on a Hom-Lie algebra to study their trivial deformations. If $T$ is an $\O$-operator on a Hom-Lie algebra $(A,[\cdot,\cdot],\a)$ with respect to a representation $(M,\rho,\b)$. Then an element $x \in A$ is called a Nijenhuis element if $\a(x)=x$ and
\begin{align*}
[[x,y],[x,z]]=0,\ \rho([x,y])\rho(x)=0,\  [x, T\rho(x)v+[Tv,x]]=0,
\end{align*}
for all $y,z \in A$ and $v \in M$.

\begin{prop}
Let $x \in  A$ be a Nijenhuis
element with respect to an  $\O$-operator $T$ on a Hom-associative $A$. Then $x$  is also a Nijenhuis element for the $O$-operator $T$ on the commutator Hom-Lie algebra $(A,[\cdot,\cdot]_C,\a)$.
\end{prop}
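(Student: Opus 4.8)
The plan is to verify, directly from the associative conditions \eqref{Nij element 1}--\eqref{Nij element 4}, the three defining conditions of a Nijenhuis element for the commutator Hom-Lie algebra $(A, [\cdot,\cdot]_C, \a)$, where $[a,b]_C = a \cdot b - b \cdot a$ and the representation on $M$ is $\rho = l - r$ with the same twist $\phi$. Since the twist map $\a$ is unchanged, the requirement $\a(x) = x$ is immediate. I would also note at the outset that $T$ remains an $\O$-operator on this commutator Hom-Lie algebra, because antisymmetrizing the associative identity $T(u)\cdot T(v) = T(l(T(u))v + r(T(v))u)$ yields $[T(u), T(v)]_C = T(\rho(T(u))v - \rho(T(v))u)$, so the statement is well posed. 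It then remains to check the three Hom-Lie Nijenhuis conditions recalled from \cite{O-op on hom-lie}.

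For the first condition, I would write $[[x,y]_C, [x,z]_C]_C = [x,y]_C \cdot [x,z]_C - [x,z]_C \cdot [x,y]_C$; since $[x,y]_C = x\cdot y - y \cdot x$, equation \eqref{Nij element 1} gives $[x,y]_C \cdot [x,z]_C = 0$, and swapping the roles of $y$ and $z$ gives $[x,z]_C \cdot [x,y]_C = 0$, so the bracket vanishes. For the second condition I would expand $\rho([x,y]_C)\rho(x) = (l_{x\cdot y - y\cdot x} - r_{x\cdot y - y\cdot x})(l_x - r_x)$ into its four terms; equation \eqref{Nij element 3} cancels the two terms carrying $l_{x\cdot y - y\cdot x}$ and \eqref{Nij element 4} cancels the two carrying $r_{x\cdot y - y\cdot x}$, giving $\rho([x,y]_C)\rho(x) = 0$.

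The key step is the third condition, $[x, T\rho(x)v + [Tv, x]_C]_C = 0$. Here I would first rewrite the inner argument using the definitions \eqref{rep on module V} of $l_T$ and $r_T$: expanding $T\rho(x)v = T(l(x,v) - r(v,x))$ together with $[Tv,x]_C = Tv \cdot x - x \cdot Tv$ and regrouping gives the identity
$$ T\rho(x)v + [Tv, x]_C = l_T(v,x) - r_T(x,v). $$
Substituting this into the outer bracket turns the condition into exactly $x \cdot (l_T(v,x) - r_T(x,v)) - (l_T(v,x) - r_T(x,v)) \cdot x = 0$, which is precisely \eqref{Nij element 2} with $u = v$. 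This recognition---that the Hom-Lie Nijenhuis expression $T\rho(x)v + [Tv, x]_C$ coincides with the combination $l_T(v,x) - r_T(x,v)$ appearing in the associative condition---is the crux of the argument; the remaining verifications are routine sign bookkeeping rather than a genuine obstacle.
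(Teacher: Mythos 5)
Your verification is correct and is exactly the direct check the paper has in mind when it dismisses the proof as ``Straightforward'': the three Hom-Lie Nijenhuis conditions for $([\cdot,\cdot]_C,\rho=l-r)$ reduce, term by term, to \eqref{Nij element 1}, \eqref{Nij element 3}--\eqref{Nij element 4}, and \eqref{Nij element 2} respectively, with the identification $T\rho(x)v+[Tv,x]_C=l_T(v,x)-r_T(x,v)$ being the only nontrivial observation. Nothing further is needed.
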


\begin{proof}
Straightforward.
\end{proof}

By Eqs. \eqref{comm-comm-zero}--\eqref{rl-rr}, it is obvious that a trivial infinitesimal deformation gives rise to a Nijenhuis element. Conversely, a Nijenhuis element can also generate a trivial infinitesimal  deformation as the following theorem shows.
%

\begin{thm}
Let $T$ be an $\O$-operator on a Hom-associative algebra $(A,\a)$ with respect to a representation $(M,l,r,\phi)$.  For any element $x \in \textrm{Nij}(T)$,  $T_t:= T+t \mathcal{T}$, with $\mathcal{T}=d_H(x)$, is a trivial   infinitesimal deformation  of $T$.
\end{thm}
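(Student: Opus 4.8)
The plan is to realize $T_t = T + t\mathcal{T}$ as the conjugate of $T$ by the pair $\big(f_t, g_t\big) := \big(\mathrm{id}_A + t(L_x - R_x),\ \mathrm{id}_M + t(l_x - r_x)\big)$ and to let this same pair witness triviality. First I would record the explicit form of the generator: unwinding \eqref{rep on module V} and \eqref{zero-diff},
\[ \mathcal{T}(u) = d_H(x)(u) = T(u)\cdot x - x\cdot T(u) + T\big(l(x,u) - r(u,x)\big), \]
so that $\mathcal{T} = T\circ(l_x - r_x) - (L_x - R_x)\circ T$. Two things must then be shown: that $T_t$ is an infinitesimal deformation (an $\mathcal{O}$-operator for all $t$), and that $(f_t, g_t)$ is a morphism of $\mathcal{O}$-operators from $T_t$ to $T_0 = T$, which is exactly triviality (the case $\mathcal{T}_2 = 0$ of the equivalence relation).

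The second goal I would settle by matching the conditions (i)--(v) listed after the definition of equivalence, specialized to $\mathcal{T}_1 = \mathcal{T}$, $\mathcal{T}_2 = 0$, against the defining relations of a Nijenhuis element. Condition (i) is \eqref{Nij element 1}; condition (ii) holds because $\alpha(x) = x$ makes $l_x - r_x$ commute with $\phi$ via the bimodule axioms; the $t$- and $t^2$-components of (iii), namely \eqref{1-cocycle-linear} and \eqref{T1-T2} with $\mathcal{T}_2 = 0$, read $\mathcal{T} = d_H(x)$ (true by construction) and $x\cdot\mathcal{T}(u) = \mathcal{T}(u)\cdot x$, which is \eqref{Nij element 2}; and conditions (iv), (v) are precisely \eqref{Nij element 3} and \eqref{Nij element 4}. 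All of these hold because $x \in \mathrm{Nij}(T)$, so $(f_t, g_t)$ is a morphism of $\mathcal{O}$-operators from $T_t$ to $T$ as soon as $T_t$ is known to be an $\mathcal{O}$-operator.

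That remaining point is the heart of the matter, and I would obtain it by transport rather than by a head-on verification of \eqref{lin def 3}. The relations just checked package into the single polynomial identity $f_t\circ T_t = T\circ g_t$ (its $t^2$-coefficient vanishes precisely because \eqref{Nij element 2} says $(L_x - R_x)\mathcal{T} = 0$). Here $f_t$ is an algebra homomorphism commuting with $\alpha$ (from \eqref{Nij element 1} and $\alpha(x) = x$), $g_t$ commutes with $\phi$, and $(f_t, g_t)$ intertwines the bimodule actions by \eqref{eq2}--\eqref{eq3}; applying the homomorphism $f_t$ to the $\mathcal{O}$-operator equation for $T_t$ and substituting $f_t T_t = T g_t$ reduces both sides, via the $\mathcal{O}$-operator identity for $T$ and the intertwining relations, to $T$ applied to equal arguments. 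Since $f_t = \mathrm{id}_A + t(L_x - R_x)$ is invertible over $\mathbb{K}[[t]]$ (inverse $\sum_{k\ge 0}(-t)^k(L_x - R_x)^k$), hence injective, this forces the $\mathcal{O}$-operator equation, as well as $T_t\phi = \alpha T_t$, for every $t$; equivalently it yields \eqref{lin def 1}--\eqref{lin def 3}, with \eqref{lin def 2} also transparent since $\mathcal{T}$ is a $d_H$-coboundary. Hence $T_t$ is an infinitesimal deformation and, by the previous paragraph, a trivial one.

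I expect the main obstacle to be the invertibility-and-linearity bookkeeping behind this transport: one must be certain that conjugating $T$ by $(f_t, g_t)$ contributes nothing beyond first order, so that $f_t^{-1} T g_t$ is genuinely the linear expression $T + t\mathcal{T}$. This is exactly the force of \eqref{Nij element 2}: being equivalent to $(L_x - R_x)\mathcal{T} = 0$, it makes $f_t T_t = T g_t$ an exact identity with no higher remainder and simultaneously truncates the conjugation. The other three relations \eqref{Nij element 1}, \eqref{Nij element 3}, \eqref{Nij element 4} are calibrated so that $f_t$ is multiplicative and $g_t$ is module-equivariant, the only inputs to the transport argument; if one instead verifies \eqref{lin def 3} directly through $\llbracket \mathcal{T}, \mathcal{T} \rrbracket_\alpha = 0$, those same four identities are what cancel every cross term in the expansion of \eqref{t-t}.
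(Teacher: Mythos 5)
Your proof is correct, and for the decisive step it takes a genuinely different route from the paper. The paper's own proof checks \eqref{lin def 1} directly, gets \eqref{lin def 2} for free because $\mathcal{T}=d_H(x)$ is a coboundary, and then simply asserts that the quadratic condition \eqref{lin def 3} follows from a ``long computation'' which it omits (pointing to \cite{tang}), before invoking the Nijenhuis conditions to conclude that $\big(\mathrm{id}_A+t(L_x-R_x),\,\mathrm{id}_M+t(l_x-r_x)\big)$ is a morphism from $T_t$ to $T$. You establish the morphism identities first --- correctly noting that \eqref{eq0}--\eqref{eq3} are bare identities that do not presuppose $T_t$ is an $\mathcal{O}$-operator, so there is no circularity --- and then obtain the $\mathcal{O}$-operator property of $T_t$ by transport: $f_tT_t=Tg_t$ holds exactly because the $t$-coefficient is the definition of $\mathcal{T}=d_H(x)$ and the $t^2$-coefficient vanishes by \eqref{Nij element 2}, and applying the multiplicative, $\mathbb{K}[[t]]$-invertible $f_t$ to the desired identity reduces both sides, via the intertwining relations and the $\mathcal{O}$-operator identity for $T$, to the same expression. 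This supplies the computation the paper omits and makes visible which Nijenhuis condition is responsible for which cancellation. The trade-off is that your argument leans entirely on the paper's earlier reduction of the morphism conditions (i)--(v) to \eqref{Nij element 1}--\eqref{Nij element 4} --- in particular on the claim that the order-$t$ coefficients of (i), (iv), (v) vanish automatically in the Hom-twisted setting --- a point both you and the paper accept without verification.
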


\begin{proof}
First, we need to prove that $T_t:= T+t \mathcal{T}$ is a infinitesimal deformation generated by   $\mathcal{T}=d_H(x)$, where $x \in \textrm{Nij}(T)$.  That is $\mathcal{T}$ must satisfy conditions \eqref{lin def 1}, \eqref{lin def 2} and \eqref{lin def 3}. First,
\begin{align*}
\a \mathcal{T}(u)=\a d_H(x)(u)=d_H(x)(\phi(u))=\mathcal{T}(\phi(u)), \forall u \in M.
\end{align*}
Then $\mathcal{T}$ satisfies \eqref{lin def 1}.  The identity \eqref{lin def 2} holds immediately since $\mathcal{T}=d_H(x)$.
The identity \eqref{lin def 3} is also straightforward. We omit details since it  follows from a long computation. We refer to  \cite{tang} for a similar computation.
Finally, since $x$ is a Nijenhuis element, it follows that the pair $(id_A + t (L_x - R_x, id_M + t (l_x - r_x))$ is a morphism of $\mathcal{O}$-operators from $T_t$ to $T$. Hence the result follows.
\end{proof}

\subsection{Formal Deformations}
In this section, we study one-parameter formal  deformations of
O-operators.   Let $(A,\mu,\a)$ be a Hom-associative algebra with a representation $(M,l,r,\phi)$.
Let $\mathbb{K}[[t]]$  be the formal power series ring in one variable $t$ and $A[[t]]$ be the formal power series in $t$ with coefficients in $A$.  Then the tuple $(A[[t]],\mu_t,\a_t)$  is a Hom-associative algebra, where the product $\mu_t$ and the structure map $\a_t$ are obtained by extending $\mu$ and $\a$ linearly over the ring $\mathbb{K}[[t]]$.  Moreover the maps $l$, $r$ and $\phi$  can be extended linearly over $\mathbb{K}[[t]]$  to obtain $\mathbb{K}[[t]]$-linear maps $l_t: A[[t]] \otimes M[[t]] \to M[[t]],\ r_t: M[[t]] \otimes A[[t]] \to M[[t]]$ and $\phi_t: M[[t]] \to M[[t]]$. Then  $(M[[t]],l_t,r_t,\phi_t)$  is a representation of the Hom-associative algebra   $(A[[t]],\mu_t,\a_t)$.

\begin{defn}
Let $T$ be an $\O$-operator on a Home associative algebra $(A,\mu,\a)$ with respect to a representation $(M,l,r,\phi)$. A formal one-parameter deformation of $T$ is given by
\begin{align*}
T_t =T_0+ \sum_{i\geq 1} t^i T_i,\quad \textrm{with}\ T_0=T,\ T_i \in C^1_{\phi,\a}(M,A)
\end{align*}
such that $T_t: M[[t]] \to A[[t]] $ is an $\O$-operator on  $(A[[t]],\mu_t,\a_t)$ with respect to $(M[[t]],l_t,r_t,\phi_t)$.
\end{defn}
That is
\begin{align}
    & T_t \phi_t =\a_t T_t ,  \label{formal def 1} \\
    & T_t(u)\cdot T_t(v)=T_t(T_t(u)v+uT_t(v)),\ \forall u,v \in M. \label{formal def 2}
\end{align}
Condition \eqref{formal def 1} is immediate since $T_i \in C^1_{\phi,\a}(M,A),  \ \forall i$.
For any $k \geq 0$, by equating the coefficients of $t^k$ from both sides of equation \eqref{formal def 2}, we obtain the following system of equations
\begin{align}
    \sum_{i+j=k} T_i(u)\cdot T_j(v)= \sum_{i+j=k} T_i(T_j(u)v+uT_j(v)).
\end{align}
The above identity holds for $k=0$, since $T$ is an $\O$-operator.  For $k=1$, we get
\begin{align}
  T(u) \cdot T_1 (v) + T_1 (u) \cdot T(v) = T ( u T_1 (v) + T_1 (u) v ) + T_1 (u T(v) + T(u)v), \forall u, v \in M,
\end{align}
which is equivalent to
\begin{align}
    \llbracket T,T_1 \rrbracket_\a =0. \label{T1 is 1-cocycle}
\end{align}
That is $d_T(T_1)=0$. Therefore, we get the following proposition.
\begin{prop}\label{lin-term-1-co}
Let $T_t = \sum_{i \geq 0} t^i T_i$ be a formal deformation of an $\mathcal{O}$-operator $T$ on $A$ with respect to the bimodule $M$. Then the $1$-cochain  $T_1 \in C^1_{\phi,\a}(M,A)$ is a $1$-cocycle with respect to  the cohomology of the $\mathcal{O}$-operator $T$.
\end{prop}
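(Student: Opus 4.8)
The plan is to extract the linear term in $t$ from the defining identity \eqref{formal def 2} of a formal deformation and to recognize the resulting equation as the statement that $d_T(T_1)=0$. First I would substitute the expansion $T_t=\sum_{i\geq 0}t^i T_i$ (with $T_0=T$) into the $\mathcal{O}$-operator condition $T_t(u)\cdot T_t(v)=T_t\big(T_t(u)v+uT_t(v)\big)$ and expand both sides as formal power series in $t$. Since $T_0=T$ is already an $\mathcal{O}$-operator, the coefficient of $t^0$ cancels automatically, which is exactly the observation recorded in the text for $k=0$.

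Next I would collect the coefficient of $t^1$. Reading off the terms of total degree one in $t$ from each side yields
\[
T(u)\cdot T_1(v)+T_1(u)\cdot T(v)=T\big(uT_1(v)+T_1(u)v\big)+T_1\big(uT(v)+T(u)v\big),
\]
for all $u,v\in M$. The decisive step is then a term-by-term comparison of this identity with the explicit formula \eqref{t-t} for the bracket $\llbracket-,-\rrbracket_\a$ on $\mathrm{Hom}(M,A)$ evaluated at the pair $(T,T_1)$: using the linearity of $T$ and $T_1$, the six terms of \eqref{t-t} match precisely the two sides of the displayed equation, so that the $t^1$-coefficient is nothing but $\llbracket T,T_1\rrbracket_\a=0$, which is \eqref{T1 is 1-cocycle}. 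Because $d_T=\llbracket T,-\rrbracket_\a$ by Theorem \ref{new-gla-2}, this reads $d_T(T_1)=0$, so $T_1$ is a $1$-cocycle in the cohomology of the $\mathcal{O}$-operator $T$, as claimed.

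I do not expect any serious obstacle here; the argument is a direct coefficient comparison, entirely parallel to the infinitesimal case where the same computation produced \eqref{lin def 2}. The only points requiring a little care are the bookkeeping in aligning the six summands of \eqref{t-t} with the two sides of the $t^1$-equation, and verifying that the $t^0$-coefficient genuinely vanishes by invoking the hypothesis that $T$ itself is an $\mathcal{O}$-operator. Once the identification of $d_T$ with the derived bracket is in hand, the conclusion is immediate.
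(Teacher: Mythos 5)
Your proposal is correct and follows essentially the same route as the paper: the paper likewise equates coefficients of $t^k$ in \eqref{formal def 2}, notes that $k=0$ holds because $T$ is an $\mathcal{O}$-operator, and identifies the $k=1$ equation with $\llbracket T,T_1\rrbracket_\a=0$, i.e.\ $d_T(T_1)=0$, via the explicit formula \eqref{t-t}. No discrepancies to report.
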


The $1$-cochain $T_1$  is called the infinitesimal of the deformation $T_t$. More generally,
if $T_i = 0$ for $1 \leq i \leq (n -1)$  and $T_n$ is a non-zero cochain, then $T_n$ is called the $n$-infinitesimal of the deformation $T_t$.

In \cite{cohom hom dend}, the author study formal deformations of Hom-dendriform algebras and their relation with the cohomology.  Here we provide a connection between deformations of an $\mathcal{O}$-operator with deformations of the corresponding Hom-dendriform algebra. We mean by a deformation of a Hom-dendriform algebra, a formal deformation of its defining two multiplications.

\begin{prop}
Let $T_t = \sum_{i \geq 0 } t^i T_i$ be a formal deformation of an $\mathcal{O}$-operator $T$. Then the formal sums
\begin{align*}
u \prec_t v = \sum_{i \geq 0} t^i u T_i (v) \quad \text{ and } \quad
u \succ_t v = \sum_{i \geq 0} t^i T_i (u) v
\end{align*}
define a formal deformation of the Hom-dendriform structure on $M$.
\end{prop}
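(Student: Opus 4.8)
The plan is to deduce the result directly from Proposition~\ref{o-dend} applied over the ground ring $\mathbb{K}[[t]]$, rather than checking the three Hom-dendriform axioms by hand. The crucial observation is that Proposition~\ref{o-dend} is a purely algebraic statement: it asserts that any $\mathcal{O}$-operator $S : N \to B$ on a Hom-associative algebra with respect to a bimodule $(N,l,r,\phi)$ endows $N$ with the Hom-dendriform structure $u \prec v = r(S(v))u$, $u \succ v = l(S(u))v$. Its proof invokes only the defining $\mathcal{O}$-operator identity together with the bimodule axioms, all of which are preserved under linear extension of scalars. Consequently the proposition remains valid when the ground field $\mathbb{K}$ is replaced by the formal power series ring $\mathbb{K}[[t]]$.

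First I would record that, by hypothesis, $T_t : M[[t]] \to A[[t]]$ is an $\mathcal{O}$-operator on the Hom-associative algebra $(A[[t]],\mu_t,\a_t)$ with respect to the representation $(M[[t]],l_t,r_t,\phi_t)$. Applying the $\mathbb{K}[[t]]$-linear version of Proposition~\ref{o-dend} to $T_t$ then equips $M[[t]]$ with a Hom-dendriform structure $(M[[t]],\prec_t,\succ_t,\phi_t)$ over $\mathbb{K}[[t]]$, given by $u \prec_t v = r_t(T_t(v))u$ and $u \succ_t v = l_t(T_t(u))v$. Since $r_t$ and $l_t$ are the $\mathbb{K}[[t]]$-linear extensions of $r$ and $l$, and $T_t = \sum_{i \geq 0} t^i T_i$, expanding yields
\[
u \prec_t v = \sum_{i \geq 0} t^i\, r(T_i(v))u = \sum_{i \geq 0} t^i\, u T_i(v), \qquad
u \succ_t v = \sum_{i \geq 0} t^i\, l(T_i(u))v = \sum_{i \geq 0} t^i\, T_i(u) v,
\]
which are exactly the formal sums in the statement. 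Extracting the coefficient of $t^0$ and using $T_0 = T$, the leading terms $u \prec v = r(T(v))u$ and $u \succ v = l(T(u))v$ coincide with the original Hom-dendriform structure on $M$ furnished by Proposition~\ref{o-dend}. Hence the pair $(\prec_t,\succ_t)$ is a formal deformation of that structure, as required.

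I do not anticipate a genuine obstacle; the only point demanding care is the justification that Proposition~\ref{o-dend} transfers verbatim to the $\mathbb{K}[[t]]$-linear setting, which is immediate once one notes that its proof is formal. Should a self-contained verification be preferred, the alternative is to substitute the formal products into the Hom-dendriform axioms \eqref{homdend-def-1}--\eqref{homdend-def-3} and compare coefficients of $t^k$; each resulting identity then reduces, via the bimodule relations, to the degree-$k$ component of the $\mathcal{O}$-operator deformation equation \eqref{formal def 2} for $T_t$. This is precisely the routine order-by-order computation that the scalar-extension argument allows us to bypass.
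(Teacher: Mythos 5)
Your proof is correct. The paper states this proposition without proof, so there is no argument to compare against line by line; your route --- observing that, by definition, $T_t$ is an $\mathcal{O}$-operator on $(A[[t]],\mu_t,\a_t)$ with respect to $(M[[t]],l_t,r_t,\phi_t)$, and then invoking Proposition \ref{o-dend} over the ground ring $\mathbb{K}[[t]]$ --- is the natural one and cleanly packages the coefficient-by-coefficient verification of \eqref{homdend-def-1}--\eqref{homdend-def-3} that the authors evidently regard as routine. The only points that need to be made explicit are exactly the ones you address: that the proof of Proposition \ref{o-dend} is purely equational in the $\mathcal{O}$-operator identity and the bimodule axioms, hence insensitive to the base change from $\mathbb{K}$ to $\mathbb{K}[[t]]$, and that the $t^0$ coefficients recover the undeformed products $\prec$ and $\succ$ on $M$.
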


\begin{corollary}
It follows that the formal sum $u \star_t v = \sum_{i \geq 0} t^i ( u T_i (v) + T_i (u) v)$ defines a formal deformation of the Hom-associative structure on $M$.
\end{corollary}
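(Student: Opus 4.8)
The plan is to obtain the corollary as an immediate consequence of the preceding proposition, using the structural fact---recorded just after the definition of a Hom-dendriform algebra---that the product $\star = \prec + \succ$ attached to any Hom-dendriform algebra is Hom-associative. No genuinely new computation should be required, since the deformed associative product is literally the sum of the two deformed dendriform operations.

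First, I would record that, by the preceding proposition, the operations $u \prec_t v = \sum_{i \geq 0} t^i\, u T_i(v)$ and $u \succ_t v = \sum_{i \geq 0} t^i\, T_i(u)v$ make $M[[t]]$ into a Hom-dendriform algebra over $\mathbb{K}[[t]]$, with twisting map $\phi$ extended $\mathbb{K}[[t]]$-linearly; that is, the identities \eqref{homdend-def-1}--\eqref{homdend-def-3} hold for $(\prec_t, \succ_t, \phi)$ to all orders in $t$.

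Next, I would apply the passage from Hom-dendriform to Hom-associative over the ground ring $\mathbb{K}[[t]]$: since $(\prec_t, \succ_t, \phi)$ satisfies the Hom-dendriform axioms, the sum $u \star_t v := u \prec_t v + u \succ_t v = \sum_{i \geq 0} t^i\big(u T_i(v) + T_i(u)v\big)$ is automatically a Hom-associative product on $M[[t]]$ with respect to $\phi$. It then remains only to check that $\star_t$ deforms the correct structure: evaluating at $t = 0$ and using $T_0 = T$ gives $u \star_0 v = u T(v) + T(u)v = u \star_T v$, the Hom-associative product furnished by Proposition \ref{o-dend}, while the twisting map is recovered as $\phi$. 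Hence $(M[[t]], \star_t, \phi)$ is a formal deformation of $(M, \star_T, \phi)$.

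I do not expect a real obstacle here: all the substance is carried by the preceding proposition and by the dendriform-to-associative construction, so the only point demanding care is bookkeeping---confirming that $\star_t = \prec_t + \succ_t$ coincides with the displayed formal sum and that the order-zero term reproduces $\star_T$ rather than merely some arbitrary Hom-associative product.
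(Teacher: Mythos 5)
Your argument is correct and is exactly the route the paper intends: the corollary is stated as an immediate consequence of the preceding proposition together with the fact that the sum of the two operations of a Hom-dendriform algebra is Hom-associative, applied over the ground ring $\mathbb{K}[[t]]$. Your extra check that the order-zero term recovers $\star_T$ (since $T_0 = T$) is the only bookkeeping needed, and it is done correctly.
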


Now, we consider the equivalence of two formal deformations of an $\O$-operator. The definition is motivated by  the  associative algebra case \cite{def O-op on ass al}.

\begin{defn}
Two formal deformations $T_t = \sum_{i \geq 0} t^i T_i$ and $\overline{T}_t = \sum_{i \geq 0} t^i \overline{T}_i$ of an $\mathcal{O}$-operator $T$ are said to be equivalent if there exists an element $x \in A$ satisfying $\a(x)=x$, linear maps $f_i: A \to A$ and $g_i: M \to M$, for $i \geq 2$, such that the pair $(f_t,g_t)$ given by
\begin{align*}
f_t = id_A + t (L_x - R_x) + \sum_{i \geq 2} t^i f_i \quad \text{ and } \quad
g_t = id_M + t (l_x - r_x) +  \sum_{i \geq 2} t^i g_i
\end{align*}
defines a formal morphism of $\mathcal{O}$-operators from $T_t$ to $\overline{T}_t$.
\end{defn}
With the above notations, for all $y,z \in A$ and $u \in M$, the equivalence of two deformations $T_t$ and  $\overline{T}_t $ corresponds to  the following conditions
\begin{align}
f_t  T_t=& T_t g_t, \label{(1)} \\
f_t(y \cdot z)=&f_t(y) \cdot f_t(z),\\
g_t(yu)=&f_t (y)g_t(u),\\
g_t(uy)=&g_t(u)f_t(y),\\
f_t  \a= \a f_t,\  &   g_t \phi =\phi g_t.
\end{align}
In particular, a formal deformation $T_t$ of an $\O$-operator $T$ is said to be trivial if it is equivalent to $T$ (here $T$ is regarded as a deformation of itself).

Now, on comparing the coefficients of $t$ from both sides of the condition \eqref{(1)}, we get
\begin{align*}
T_1 (u) - \overline{T}_1 (u) = T(u) \cdot x - T(ux) - x \cdot T(u) + T (xu) = d_H (x) (u).
\end{align*}
Consequently, we obtain the following result.
\begin{prop}
The infinitesimals of equivalent deformations are cohomologous, i.e. they lie on the same cohomology class .
\end{prop}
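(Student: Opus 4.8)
The plan is to establish that two equivalent formal deformations have cohomologous infinitesimals by extracting the order-$t$ information from the equivalence condition \eqref{(1)}, namely $f_t \circ T_t = T_t \circ g_t$. First I would expand both sides as formal power series in $t$ using the given expressions $f_t = \mathrm{id}_A + t(L_x - R_x) + \sum_{i\ge 2} t^i f_i$, $g_t = \mathrm{id}_M + t(l_x - r_x) + \sum_{i\ge 2} t^i g_i$, together with $T_t = \sum_{i\ge 0} t^i T_i$ and $\overline{T}_t = \sum_{i\ge 0} t^i \overline{T}_i$, recalling $T_0 = \overline{T}_0 = T$. The key step is then to collect the coefficient of $t^1$ on each side: on the left-hand side the $t$-coefficient of $f_t \circ T_t$ is $T_1 + (L_x - R_x)\circ T = T_1 + x \cdot T(\,\cdot\,) - T(\,\cdot\,)\cdot x$, while on the right-hand side the $t$-coefficient of $\overline{T}_t \circ g_t$ is $\overline{T}_1 + T\circ(l_x - r_x) = \overline{T}_1 + T(xu - ux)$ when evaluated at $u \in M$.

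Equating these two $t$-coefficients gives, for every $u \in M$,
\begin{align*}
T_1(u) + x\cdot T(u) - T(u)\cdot x = \overline{T}_1(u) + T(l(x,u)) - T(r(u,x)),
\end{align*}
which rearranges to
\begin{align*}
T_1(u) - \overline{T}_1(u) = T(u)\cdot x - T(r(u,x)) - x\cdot T(u) + T(l(x,u)).
\end{align*}
Comparing the right-hand side with the degree-zero differential \eqref{zero-diff}, namely $d_H(a)(m) = T(m)\cdot a - T(ma) - a\cdot T(m) + T(am)$, I recognize this as precisely $d_H(x)(u)$. Hence $T_1 - \overline{T}_1 = d_H(x)$, so the two infinitesimals differ by a coboundary and therefore represent the same class in $H^1_\a(M,A)$. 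This is exactly the identity already recorded in the paragraph immediately preceding the proposition, so the proof is essentially the bookkeeping that produces that line.

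The only subtlety worth flagging is the handling of the twisting maps $\a$ and $\phi$: one must confirm that the lift conventions make the compositions $L_x$, $R_x$, $l_x$, $r_x$ interact correctly with the cochain condition, but since $\a(x) = x$ is assumed and the $t^1$-coefficient involves no higher powers of the structure maps, the multiplicative twisting does not obstruct the computation at this order. I do not expect any genuine obstacle here; the higher-order maps $f_i, g_i$ for $i\ge 2$ contribute only to coefficients of $t^i$ with $i\ge 2$ and are irrelevant to the $t^1$-comparison. Thus the entire argument reduces to reading off one coefficient and invoking the definition of $d_H$, which is why the paper states the result as an immediate consequence.
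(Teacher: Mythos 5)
Your proposal is correct and follows exactly the paper's own argument: the paper derives the identity $T_1(u)-\overline{T}_1(u)=T(u)\cdot x-T(ux)-x\cdot T(u)+T(xu)=d_H(x)(u)$ by comparing the coefficients of $t$ in the morphism condition \eqref{(1)} (reading it, as you do, as $f_t\circ T_t=\overline{T}_t\circ g_t$), and then states the proposition as an immediate consequence. Your bookkeeping of the $t^1$-coefficients and identification with \eqref{zero-diff} matches the paper's computation precisely.
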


\begin{defn}
An $\mathcal{O}$-operator $T$ is said to be rigid if every deformation $T_t$ of $T$ is equivalent to the deformation $\overline{T}_t = T$.
\end{defn}

As a cohomological condition of the rigidity, we have the following result which
suggests that the rigidity of an $\O$-operator is a very strong condition.

\begin{prop}
Let $T$ be an $\mathcal{O}$-operator on a Hom-associative algebra $A$ with respect to a bimodule $M$. If $Z^1 (M, A) = d_H (\mathrm{Nij}(T))$ then $T$ is rigid.
\end{prop}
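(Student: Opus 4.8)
The plan is to trivialize an arbitrary formal deformation order by order, using the Nijenhuis elements provided by the hypothesis. Observe first that since $\textrm{Nij}(T)\subseteq A=C^0_\a(M,A)$ and $d_H(C^0_\a(M,A))=B^1(M,A)$, the assumption $Z^1(M,A)=d_H(\textrm{Nij}(T))$ forces $Z^1(M,A)=B^1(M,A)$, i.e. $H^1_\a(M,A)=0$; the sharper content is that \emph{every} $1$-cocycle is realized as $d_H$ of a Nijenhuis element, and this is exactly what lets each infinitesimal trivialization be promoted to a genuine formal equivalence. Concretely, I would argue by induction that a given deformation $T_t=T+\sum_{i\geq 1}t^iT_i$ is equivalent to one of the form $T+t^nT_n+O(t^{n+1})$, the case $n=1$ being $T_t$ itself.

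First I would record the higher-order analogue of Proposition \ref{lin-term-1-co}: if $T_t=T+t^nT_n+O(t^{n+1})$ is a formal deformation, then equating the coefficient of $t^n$ in the defining identity \eqref{formal def 2} and using $T_1=\cdots=T_{n-1}=0$, all mixed terms drop out and leave precisely the cocycle condition $\llbracket T,T_n\rrbracket_\a=0$, so that $T_n\in Z^1(M,A)$. By the hypothesis there is then a Nijenhuis element $x_n\in\textrm{Nij}(T)$ with $T_n=d_H(x_n)$.

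Next I would apply the transformation modelled on the theorem that a Nijenhuis element generates a trivial infinitesimal deformation, but placed at order $t^n$: set $f_t=\mathrm{id}_A+t^n(L_{x_n}-R_{x_n})$ and $g_t=\mathrm{id}_M+t^n(l_{x_n}-r_{x_n})$, and form $\overline{T}_t:=f_t\circ T_t\circ g_t^{-1}$. Since $g_t^{-1}=\mathrm{id}_M-t^n(l_{x_n}-r_{x_n})+O(t^{2n})$, expanding to order $t^n$ gives a coefficient $x_n\cdot T(u)-T(u)\cdot x_n-T(l(x_n,u)-r(u,x_n))+T_n(u)=T_n(u)-d_H(x_n)(u)=0$, while lower-order terms are untouched; hence $\overline{T}_t=T+O(t^{n+1})$, which advances the induction. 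The pair $(f_t,g_t)$ is a bona fide morphism of $\mathcal{O}$-operators because its only nontrivial contribution sits in degree $t^n$: the algebra-morphism and bimodule-compatibility equations collapse to their degree-$t^n$ and degree-$t^{2n}$ parts, which are the derivation-type identities (automatically valid, as in the infinitesimal computation of conditions (i),(iv),(v)) together with the Nijenhuis conditions \eqref{Nij element 1}, \eqref{Nij element 3}, \eqref{Nij element 4} for $x_n$; these hold since $x_n\in\textrm{Nij}(T)$. As both components are invertible over $\mathbb{K}[[t]]$, $\overline{T}_t$ is again an $\mathcal{O}$-operator, hence a deformation equivalent to $T_t$.

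Finally I would assemble the iteration: at each stage the equivalence used is the identity modulo $t^n$, so it leaves the already-trivialized lower-order terms unchanged, and the infinite composite of these pairs therefore converges coefficientwise to a well-defined formal morphism carrying $T_t$ to $T$. This establishes rigidity. The main obstacle is the bookkeeping in the inductive step: one must re-derive, at the shifted degrees $t^n$ and $t^{2n}$, the morphism identities worked out in the infinitesimal case, and in particular confirm that the derivation-type (degree-$t^n$) equations hold identically, so that only the Nijenhuis conditions \eqref{Nij element 1}--\eqref{Nij element 4} are required. Convergence of the composite is then automatic, since successive equivalences are increasingly close to the identity.
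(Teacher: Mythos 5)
Your proposal is correct and follows essentially the same route as the paper: kill the lowest-order term $T_n$ by writing it as $d_H(x_n)$ for a Nijenhuis element and conjugating by $f_t=\mathrm{id}_A+t^n(L_{x_n}-R_{x_n})$, $g_t=\mathrm{id}_M+t^n(l_{x_n}-r_{x_n})$. The paper only carries out the first step explicitly and says ``by repeating this argument''; you have simply made the induction, the order-$n$ cocycle statement, and the $t$-adic convergence of the composite explicit.
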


\begin{proof}
Let $T_t = \sum_{i \geq 0} t^i T_i$ be any formal deformation of the $\mathcal{O}$-operator $T$. Then by Proposition \ref{lin-term-1-co}, the linear term $T_1$ is in $Z^1(M, A)$. Therefore, by assumption, we have $T_1 = d_H (x)$, for some $x \in \text{Nij} (T)$. Set
\begin{align*}
f_t := id_A + t (L_x-R_x )  \quad \text{ and }  \quad g_t := id_M + t (l_x - r_x)
\end{align*}
and define $\overline{T}_t = f_t \circ T_t \circ g_t^{-1}$. Then $T_t$ is equivalent to $\overline{T}_t$. Moreover, from the definition of $\overline{T}_t$, we have
\begin{align*}
\overline{T}_t (u) = (id_A + t (L_x - R_x )) \circ (\sum_{i \geq 0} t^i T_i ) \circ (id_M - t (l_x- r_x) + t^2 (l_x - r_x)^2 - \cdots ) (u), ~~~ \text{ for } u \in M.
\end{align*}
Hence
\begin{align*}
\overline{T}_t (u) \quad (\text{mod } t^2) =~& (id_A + t (L_x - R_x )) \circ (T + t T_1) (u - t(xu-ux))  \quad (\text{mod } t^2)\\
=~& (id_A + t (L_x- R_x )) (T(u) + t T_1 (u) - t T(xu -ux))  \quad (\text{mod } t^2)\\
=~& T(u) + t \big( x \cdot T(u) - T(u) \cdot x -  T(xu - ux) +  T_1 (u) \big).
\end{align*}
The coefficient of $t$ is zero as $T_1 = d_H (x)$. See \eqref{zero-diff} for instance. Therefore, $\overline{T}_t$ is of the form $\overline{T}_t = T + \sum_{i \geq 2} t^i \overline{T}_i$. By repeating this argument, one get the equivalence between $T_t$ and $T$. Hence the proof.
\end{proof}

\subsection{Order $n$ deformation of an $\O$-operator}
In this section, We introduce a cohomology
class associated to any order $n$ deformation of an $\O$-operator. We prove that
an order $n$ deformation of an $\O$-operator is extensible if and only if this cohomology
class is trivial. This cohomology class  is called the obstruction class of an order $n$
deformation being extensible.
Let $(A,\mu,\a)$ be a Hom-associative algebra with a representation $(M,l,r,\phi)$.
Let $T: M \to A$ be an $\O$-operator on $A$ with respect to $M$.
An order $n$ deformation  of the $\O$-operator $T$ is given by a $\mathbb{K}[[t]]/(t^{n+1})$-linear map
$$T_t =T+ \sum_{i=1}^n t^i T_i,\quad T_i \in C^1_{\phi,\a}(M,A)$$
such that
\begin{align*}
    T_t(u) \cdot T_t(v)=T_t(T_t(u)v+uT_t(v))\quad \textrm{mod}(t^{n+1}),\ \forall u,v \in M.
\end{align*}
Which is equivalent to the following system of equations
\begin{align*}
   \sum_{i+j=k} T_i(u) \cdot T_j(v)=\sum_{i+j=k} T_i(T_j(u)v+uT_j(v)),\quad \textrm{for any}\ k=0,1,\cdots n.
\end{align*}
or equivalently,
\begin{align}
    \llbracket T,T_k \rrbracket_\a=-\frac{1}{2} \sum_{i+j=k,i,j \geqslant 1}   \llbracket T_i,T_j \rrbracket_\a,\quad \textrm{for any}\ k=0,1,\cdots n.
\end{align}
\begin{defn}
Let $T_t =T+ \sum_{i=1}^n t^i T_i$ be an  order $n$ deformation of an $\O$-operator $T$ on $A$ with respect to $M$. If there exists  a $1$-cochain $T_{n+1} \in C^1_{\phi,\a}(M,A)$ such that $\widetilde{T_t}=T_t+t^{n+1}T_{n+1}$ is an order $(n+1)$ deformation of  $T$, then we say  that $T_t$ extends to a deformation of order $(n+1)$.
In this case, we say that $T_t$ is extendable.
\end{defn}

Let us observe that the map $\widetilde{T}_t:=T_t+t^{n+1}T_{n+1}$ is an extension of the order $n$ deformation $T_t$ if and only if
$$\sum_{\substack{i+j=n+1\\i,j\geq 0}} \llbracket T_i,T_j \rrbracket_\a=0.$$
That is
\begin{align}
\llbracket T,T_{n+1} \rrbracket_\a=-\frac{1}{2} \sum_{\substack{i+j=n+1\\ i,j> 0}}\llbracket T_i,T_j\rrbracket_\a.
\end{align}

\begin{defn}
Let $T_t$ be an order $n$ deformation of the $\mathcal{O}$-operator $T$. Let us consider a $2$-cochain $\Theta_T \in C^2_{\phi,\alpha}(M, A)$ defined as follows
\begin{equation}\label{Obst}
\Theta_T =-\frac{1}{2}\sum_{i+j=n+1;~i,j>0}\llbracket T_i,T_j \rrbracket_\a.
\end{equation}
The $2$-cochain $\Theta_T$ is called the obstruction cochain for extending the deformation $T_t$ of order $n$ to a deformation of order $n+1$.
From equation \eqref{Obst} and using graded Jacobi identity of the bracket $\llbracket \cdot,\cdot \rrbracket_\a$, it follows that $\Theta_T$ is a $2$-cocycle.
\end{defn}
The cohomology class $|\Theta_T| \in H^2(M,A)$  is called the obstruction class of $T_t$  being extendable.

\begin{thm}
An order $n$ deformation $T_t$ extends to a deformation of next order if and only if the obstruction class $[\Theta_T]$ is trivial.
\end{thm}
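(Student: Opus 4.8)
The plan is to reduce the equivalence ``extendable $\iff$ obstruction class trivial'' to a direct comparison between the extension condition and the cocycle definition of the obstruction. First I would recall that, by the preceding definition, the obstruction cochain is
\begin{align*}
\Theta_T = -\frac{1}{2}\sum_{\substack{i+j=n+1\\ i,j>0}}\llbracket T_i, T_j\rrbracket_\a,
\end{align*}
and that $\Theta_T$ is a $2$-cocycle, so the class $[\Theta_T]\in H^2_\a(M,A)$ is well defined. The key observation, already recorded just before the theorem, is that $T_t$ extends to an order $(n+1)$ deformation precisely when there is a $1$-cochain $T_{n+1}\in C^1_{\phi,\a}(M,A)$ satisfying $\llbracket T, T_{n+1}\rrbracket_\a = \Theta_T$. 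Since $d_T = \llbracket T, -\rrbracket_\a$ is the coboundary operator of the $\O$-operator cohomology (Theorem \ref{new-gla-2} and the subsequent identification), this extension equation is exactly $d_T(T_{n+1}) = \Theta_T$.

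From here the argument splits into the two implications. For the forward direction, I would assume $T_t$ is extendable; then such a $T_{n+1}$ exists with $d_T(T_{n+1}) = \Theta_T$, which exhibits $\Theta_T$ as a coboundary, hence $[\Theta_T] = 0$. For the converse, I would assume $[\Theta_T] = 0$ in $H^2_\a(M,A)$; since $\Theta_T$ is a $2$-cocycle representing the zero class, it must be a coboundary, so there exists $T_{n+1}\in C^1_\a(M,A)$ with $d_T(T_{n+1}) = \Theta_T$, i.e. $\llbracket T, T_{n+1}\rrbracket_\a = \Theta_T$. Unwinding $\Theta_T$, this is precisely the condition
\begin{align*}
\llbracket T, T_{n+1}\rrbracket_\a = -\frac{1}{2}\sum_{\substack{i+j=n+1\\ i,j>0}}\llbracket T_i, T_j\rrbracket_\a,
\end{align*}
which guarantees that $\widetilde{T}_t = T_t + t^{n+1}T_{n+1}$ satisfies the Maurer-Cartan condition through order $n+1$, so $T_t$ is extendable.

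The only genuine content beyond this bookkeeping is that $\Theta_T$ really is a $2$-cocycle, so that ``$[\Theta_T]=0$'' is equivalent to ``$\Theta_T\in B^2_\a(M,A)$''; this was asserted in the definition and follows from the graded Jacobi identity for $\llbracket-,-\rrbracket_\a$ together with the deformation equations $\llbracket T, T_k\rrbracket_\a = -\frac{1}{2}\sum_{i+j=k,\,i,j\geq 1}\llbracket T_i, T_j\rrbracket_\a$ holding for $k\leq n$. Concretely, applying $d_T$ to $\Theta_T$ and expanding via graded Jacobi, every term pairs a lower bracket against $T$ and reassembles into the already-satisfied relations, forcing $d_T(\Theta_T)=0$. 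I expect this verification to be the main technical obstacle, since it is the one step that uses the full order-$n$ deformation hypothesis rather than a formal reindexing; the rest of the proof is the standard ``obstruction is a coboundary iff its class vanishes'' dichotomy, which I would state briefly and cite the analogous associative-algebra computation in \cite{def O-op on ass al} for the detailed identity.
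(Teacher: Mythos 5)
Your proposal is correct and follows essentially the same route as the paper's own proof: identify the extension equation $\llbracket T, T_{n+1}\rrbracket_\a = \Theta_T$ with the statement that $\Theta_T$ is a coboundary, and rely on the (asserted) cocycle property of $\Theta_T$ so that vanishing of the class is equivalent to exactness. If anything, your write-up is more careful than the paper's, which states the forward direction somewhat circularly and carries a harmless sign slip ($\Theta_T = -d_T\mathcal{T}_{n+1}$) in the converse; like the paper, you defer the graded-Jacobi verification that $d_T(\Theta_T)=0$, which is the only step with real content.
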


\begin{proof}
Suppose that a deformation of $\mathcal{O}$-operator $T_t$ of order $n$ extends to a deformation of order $n + 1$. Then
$$\sum_{i+j=n+1}( T_i(u) \cdot T_j(v)- T_i(T_j(u)v+uT_j(v)))= 0.$$
As a result, we get $\Theta_T =-\frac{1}{2}\sum_{i+j=n+1;~i,j>0}\llbracket T_i,T_j \rrbracket_\a.$ So, the obstruction class vanishes.
Conversely,  if the obstruction class $|\Theta_T|$ is trivial, suppose that $\Theta_T= -d_T \mathcal{T}_{n+1}$  for some $1$-cochain $\mathcal{T}_{n+1}$. 
Put $\tilde{T}_t=T_t+\mathcal{T}t^{n+1}$. Then $\tilde{T}$ is a deformation of order $n+1$, which means that $T_t$ is extendable.
\end{proof}

\begin{corollary}
If $H^2 (M, A) = 0$,  then every finite order deformation of $T$ extends to a deformation of next order.
\end{corollary}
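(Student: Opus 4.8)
The plan is to deduce this corollary directly from the preceding theorem, which characterizes extensibility of an order $n$ deformation in terms of the triviality of the obstruction class $[\Theta_T] \in H^2(M,A)$. The entire argument reduces to observing that the hypothesis $H^2(M,A)=0$ forces every cohomology class in degree $2$ to be trivial, so in particular the obstruction class associated to any finite order deformation must vanish.

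First I would let $T_t = T + \sum_{i=1}^n t^i T_i$ be an arbitrary order $n$ deformation of the $\mathcal{O}$-operator $T$, for any $n \geq 1$. Following the construction in \eqref{Obst}, I would form the obstruction cochain
\begin{align*}
\Theta_T = -\frac{1}{2} \sum_{i+j=n+1;~i,j>0} \llbracket T_i, T_j \rrbracket_\a \in C^2_{\phi,\a}(M,A),
\end{align*}
which, as established in the definition preceding the theorem, is a $2$-cocycle by the graded Jacobi identity for $\llbracket \cdot, \cdot \rrbracket_\a$. Hence it determines a well-defined cohomology class $[\Theta_T] \in H^2(M,A)$.

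The key step is then immediate: since by assumption $H^2(M,A) = Z^2_\a(M,A)/B^2_\a(M,A) = 0$, the class $[\Theta_T]$ is automatically trivial. Applying the preceding theorem, triviality of $[\Theta_T]$ is precisely the condition guaranteeing that the order $n$ deformation $T_t$ extends to a deformation of order $n+1$. Since $n$ and the chosen deformation $T_t$ were arbitrary, this shows that every finite order deformation of $T$ extends to the next order, which is the assertion of the corollary.

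There is no genuine obstacle here; the statement is a formal consequence of the theorem, and the only thing to verify is that the obstruction class lives in the cohomology group that is being assumed to vanish. I would simply remark that the proof is an immediate application of the previous theorem, noting that the vanishing of $H^2(M,A)$ trivially kills the obstruction class $[\Theta_T]$ for any order $n$, so that extensibility holds at every stage.
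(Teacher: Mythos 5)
Your proof is correct and follows exactly the route the paper intends: the corollary is an immediate application of the preceding theorem, since the obstruction class $[\Theta_T]$ lives in $H^2(M,A)$ and the hypothesis forces it to vanish. The paper omits the proof entirely, treating it as immediate, and your argument supplies precisely the missing (routine) details.
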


\begin{corollary}
If $H^2 (M, A) = 0$,  then every $1$-cocycle in $Z^1 (M, A)$ is the infinitesimal of some formal deformation of the $\mathcal{O}$-operator $T$.
\end{corollary}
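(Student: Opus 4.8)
The plan is to construct the formal deformation $T_t = T + \sum_{i \geq 1} t^i T_i$ order by order, starting from $T_1$ equal to the given $1$-cocycle, and to feed each inductive step into the extension theorem proved just above. First I would recall that, by the defining equations of a formal deformation, the coefficients $(T_i)_{i\geq 1}$ must satisfy, for every $k \geq 1$,
\begin{align*}
\llbracket T, T_k \rrbracket_\a = -\frac{1}{2} \sum_{i+j=k,\, i,j \geq 1} \llbracket T_i, T_j \rrbracket_\a .
\end{align*}
For $k=1$ this reduces to $d_T(T_1) = \llbracket T, T_1 \rrbracket_\a = 0$, which holds exactly because $T_1 \in Z^1(M,A)$. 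Hence $T_t^{(1)} = T + t T_1$ is already an order $1$ deformation whose infinitesimal is the prescribed cocycle, and this supplies the base of the induction.

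For the inductive step I would assume that $T_1, \ldots, T_n$ have been chosen so that $T_t^{(n)} = T + \sum_{i=1}^n t^i T_i$ is an order $n$ deformation of $T$. The obstruction to prolonging $T_t^{(n)}$ one step further is the $2$-cochain
\begin{align*}
\Theta_T = -\frac{1}{2} \sum_{i+j=n+1;\, i,j>0} \llbracket T_i, T_j \rrbracket_\a ,
\end{align*}
which is a $2$-cocycle by the graded Jacobi identity for $\llbracket\,\cdot\,,\,\cdot\,\rrbracket_\a$, as already noted. By the extension theorem, $T_t^{(n)}$ prolongs to an order $n+1$ deformation if and only if the class $[\Theta_T] \in H^2(M,A)$ is trivial. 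Since $H^2(M,A) = 0$ by hypothesis, this class vanishes automatically, so there exists a $1$-cochain $T_{n+1} \in C^1_{\phi,\a}(M,A)$ realizing the required prolongation, and $T_t^{(n+1)} = T_t^{(n)} + t^{n+1} T_{n+1}$ is an order $n+1$ deformation.

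Iterating this step over all $n \geq 1$ produces a full formal power series $T_t = T + \sum_{i \geq 1} t^i T_i$ satisfying the deformation equation at every order, that is, a genuine formal deformation of $T$ whose linear term is the given $1$-cocycle $T_1$. I do not expect a real obstacle here: the entire analytic content has been packaged into the extension theorem, and the only point of the statement is that the single hypothesis $H^2(M,A)=0$ annihilates all obstructions at once. The only things warranting care are that the base case is furnished precisely by the cocycle condition $d_T(T_1)=0$ and that, at each stage, the order $n$ deformation built so far matches exactly the hypothesis format required to invoke the extension theorem.
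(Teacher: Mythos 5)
Your proposal is correct and is exactly the argument the paper intends: the corollary is an immediate consequence of the extension theorem, obtained by taking the given cocycle as $T_1$ (the cocycle condition $\llbracket T, T_1\rrbracket_\a = 0$ giving the order $1$ deformation) and then inductively killing each obstruction class via $H^2(M,A)=0$. The paper leaves this induction implicit; you have simply spelled it out, with the base case and inductive step handled as expected.
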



\end{document}